\newcommand{\CM}{Cohen-Macaulay}
\newcommand{\bP}{\mathbb{\partial}}
\newcommand{\cB}{\mathcal{B}}
\newcommand{\wrt}{with respect to}
\newcommand{\m}{\mathfrak{m} }
\newcommand{\PP}{\mathbb{P} }
\newcommand{\Ass}{\operatorname{Ass}}
\newcommand{\htt}{\operatorname{ht}}
\theoremstyle{plain}
\newtheorem{theorem}{Theorem}[section]
\newtheorem{corollary}[theorem]{Corollary}
\newtheorem{lemma}[theorem]{Lemma}
\newtheorem{proposition}[theorem]{Proposition}
\theoremstyle{definition}
\newtheorem{definition}[theorem]{Definition}
\newtheorem{remark}[theorem]{Remark}
\newtheorem{example}[theorem]{Example}
\theoremstyle{remark}
\begin{document}

\title[De Rahm]{De Rahm Cohomology of Local Cohomology  modules II}

\author{Tony~J.~Puthenpurakal}
\author{Rakesh B. T. Reddy  }
\date{\today}
\address{Department of Mathematics, IIT Bombay, Powai, Mumbai 400 076}

\email{tputhen@math.iitb.ac.in} \email{rakesh@math.iitb.ac.in}

 \subjclass{Primary 13D45; Secondary 13N10 }
\keywords{local cohomology, associated primes, D-modules, Koszul
homology}
\begin{abstract}

Let $K$ be an algebraically closed  field of characteristic zero and
let $R = K[X_1,\ldots,X_n]$. Let $I$ be an ideal in $R$. Let
$A_n(K)$ be the $n^{th}$ Weyl algebra over $K$. By a result of
Lyubeznik, the local cohomology modules $H^i_I(R)$ are holonomic
$A_n(K)$-modules for each $i \geq 0$. In this paper we compute the
Euler characteristic of De-Rahm cohomology of $H^{\htt P}_P(R)$ for
certain classes of prime ideals $P$ in $R$.
\end{abstract}

\maketitle
\section*{Introduction}
Let $K$ be an algebraically closed  field of characteristic zero, $R
= K[X_1,\ldots,X_n]$ and let $I$ be an ideal in $R$. For $i \geq 0$
let $H^i_I(R)$ be the $i^{th}$-local cohomology module of $R$ with
respect to $I$. Let $A_n(K)  = K<X_1,\ldots,X_n, \partial_1, \ldots,
\partial_n>$  be the $n^{th}$ Weyl algebra over $K$. By a result due
to Lyubeznik, see \cite{L}, the local cohomology modules $H^i_I(R)$
are finitely generated $A_n(K)$-modules for each $i \geq 0$. In fact
they
 are \textit{holonomic} $A_n(K)$ modules. In \cite{BJ} holonomic $A_n(K)$ modules are denoted as $\cB_n(K)$, the  \textit{Bernstein} class of left $A_n(K)$ modules.

Let $N$ be a left $A_n(K)$ module. Now $\bP =
\partial_1,\ldots,\partial_n$ are pairwise commuting $K$-linear
maps. So we can consider the De Rahm complex $K(\bP;N)$. Notice that
the De Rahm cohomology modules  $H^*(\bP;N)$ are in general only
$K$-vector spaces. They are finite dimensional if $N$ is holonomic;
see \cite[Chapter 1, Theorem 6.1]{BJ}. In particular the Euler
characteristic $\chi^c(N) = \sum_{i = 0}^{n} \dim_K H^i(\bP; N)$ is
a finite number.
 In this paper we compute the
 Euler characteristic
  $\chi^c(H^g_P(R))$  for certain prime ideals of height $g$.

  We now describe the results of this paper.
Let   $P$
  be a height  $n-1$ prime ideal in $R$. Let $\mathcal{C} = V(P)$ in $\mathbb{A}^n(K)$ and let $\widehat{\mathcal{C}}$
  denotes
it's  projective closure in $\mathbb{P}^n(K)$. Set
$\deg(\mathcal{C}) = \deg(\widehat{\mathcal{C}})$.
   Let $z\in R$ sufficiently general
  linear form in $x_i's.$ Then in Theorem \ref{Main-Thrm-1} we prove that
\begin{center}
   $\chi^c(H^{n-1}_P(R)_z) -
  \chi^c(H^{n-1}_P(R))= (-1)^n\deg(V(P))$
  \end{center}
  If $S$ is a finite set then let $\sharp S$ denote the number of elements in $S$. Note that (with notation as above) $\widehat{\mathcal{C}} \setminus \mathcal{C}$ is a finite set of points which are called points of $C$ at infinity. Set
  $V_{\infty}(\mathcal{C}) = \sharp(\widehat{\mathcal{C}} \setminus \mathcal{C})$. In \ref{H_0-pts at inf}(a) we prove that
  \[
  \dim_K H^n(\bP; H^{n-1}_P(R)) \geq V_{\infty}(\mathcal{C}) - 1.
  \]

By Corollary \ref{proj-curves} we get that if $P$ is a graded prime
ideal of  height $n-2$ then
\[
\chi^{c}(H^{n-2}_P(R)) = (-1)^n.
\]
Prime ideals of height $n - 2$ correspond to curves in
$\PP^{n-1}(K)$. To say something about the Euler -characteristic for
higher degree varieties we need to make some more hypotheses. So let
$S = V(P)$ be a \CM \ surface, i.e., $\htt(P) = n-3$ and $Proj(R/P)$
is \CM. Note we have $H^i_P(R) = 0$ for $i \geq n-1$. It can be
shown that $H^{n-2}_P(R) = E_R(R/\m)^s$ for some $s \geq 0$; here
$\m = (X_1,\ldots,X_n)$ and $E_R(R/\m)$ is the injective hull of
$R/\m$; (see Lemma 4.5). In Theorem \ref{chi-HP^n-3=s-1} we prove
that
\[
\chi^{c}(H^{n-3}_P(R)) = (-1)^n(s-1).
\]
Thus $H^{n-3}_P(R)$ completely determines $H^{n-2}_P(R)$ if $V(P)$
is a \CM  \ surface. This is a completely unexpected result.

Next we consider the case when $V(P)$ is a $r$-dimensional
non-singular variety in $\PP^{n-1}(K)$ with $r \geq 2$. So $\htt(P)
= n -r -1$. It can be easily  shown that for $i \geq n-r$ the local
cohomology modules  $H^i_P(R) = E_R(R/\m)^{s_j}$ for some $s_j  \geq
0$; see Lemma 5.1. Our final result; Theorem 5.2;  is
\[
\chi^c(H^{n-r-1}_P(R) = (-1)^{n-r}\left( -1 + \sum_{j \geq
n-r}(-1)^{n-j}s_j \right).
\]
Thus the numbers $s_j$ cannot be arbitrary.

In section 1 we introduce notation and discuss a few preliminaries
that we need.
  We do not know of any  software to
   compute $H_i(\underline{\partial}:-)$. So in section 6 we compute
   $H_i(\underline{\partial}:H^1_{(f)}(R))$ for some irreducible
   polynomial $f\in K[x,y]$.
\section{Preliminaries}
In this section we discuss few preliminary results that we need.
\begin{remark}
Although all the results are stated for De-Rahm cohomology of a
$A_n(K)$-module $M$, we will actually work with De-Rahm homology.
Note that $H_i(\bP; M) = H^{n-i}(\bP; M)$ for any $A_n(K)$-module.
Also note that if
$$\chi(M) = \sum_{i = 0}^{n}(-1)^i\dim_K H_i(\bP; M),$$
 then $\chi(M) = (-1)^n\chi^c(M)$.
 Let $S = K[\partial_1,\ldots,\partial_n]$. Consider it as a subring of $A_n(K)$. Then note that $H_i(\bP; M)$ is the $i^{th}$ Koszul homology module of $M$ with respect to $\bP$.
\end{remark}

\begin{lemma}\label{H1-van}
Let $M$ be an $A_n(K)$-module. Set $M_0=ker(\partial_n:M\mapsto M)$
and $\bar{M}=M/\partial_nM.$ Set
$\underline{\partial}^{'}=\partial_1,\cdots,\partial_{n-1},$ and
$\underline{\partial}=\partial_1,\cdots,\partial_{n}.$ Then there
exist an exact sequence
\begin{center}
$\cdots \rightarrow H_{i-1}(\underline{\partial}^{'}; M_0)
\rightarrow H_i(\underline{\partial}; M) \rightarrow
H_{i}(\underline{\partial}^{'}; \bar{M})\rightarrow
H_{i-2}(\underline{\partial}^{'}; M_0)\rightarrow \cdots$
\end{center}
\end{lemma}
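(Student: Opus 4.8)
The plan is to peel the last variable $\partial_n$ off and realize the Koszul complex over $S=K[\partial_1,\dots,\partial_n]$ as a mapping cone. Recall (from the preceding remark) that $H_i(\underline{\partial};M)$ is the $i$th homology of the Koszul complex $K_\bullet(\underline{\partial};M)$ of $M$ over $S$, and similarly $H_i(\underline{\partial}';M)=H_i(C_\bullet)$ where $C_\bullet:=K_\bullet(\underline{\partial}';M)$ is the Koszul complex over $S'=K[\partial_1,\dots,\partial_{n-1}]$. Because $\partial_n$ is central in $S$, multiplication by $\partial_n$ is a chain endomorphism of $C_\bullet$, and the usual identity $K_\bullet(\partial_1,\dots,\partial_n;M)=K_\bullet(\partial_n;S)\otimes_S C_\bullet$ shows that $K_\bullet(\underline{\partial};M)$ is the mapping cone $\operatorname{Cone}(\partial_n\colon C_\bullet\to C_\bullet)$.

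Two elementary observations drive the argument. First, in each degree $q$ the map $\partial_n$ on $K_q(\underline{\partial}';M)\cong M^{\oplus\binom{n-1}{q}}$ is just $\partial_n$ acting coordinatewise; since $\partial_n$ commutes with $\partial_1,\dots,\partial_{n-1}$, the submodule $M_0=\ker\partial_n$ and the quotient $\bar M=M/\partial_nM$ are $S'$-modules and the Koszul differential of $C_\bullet$ restricts to $M_0$ and descends to $\bar M$, so that $\ker(\partial_n\colon C_\bullet\to C_\bullet)=K_\bullet(\underline{\partial}';M_0)$ and $\operatorname{coker}(\partial_n\colon C_\bullet\to C_\bullet)=K_\bullet(\underline{\partial}';\bar M)$ as complexes. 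Second, the chain map $\partial_n\colon C_\bullet\to C_\bullet$ factors as $C_\bullet\xrightarrow{p}K_\bullet(\underline{\partial}';\partial_nM)\xrightarrow{j}C_\bullet$, where $p$ is the surjection induced by $M\twoheadrightarrow\partial_nM$ and $j$ the injection induced by $\partial_nM\hookrightarrow M$; these have kernel $K_\bullet(\underline{\partial}';M_0)$ and cokernel $K_\bullet(\underline{\partial}';\bar M)$ respectively.

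From here I would produce the exact sequence directly. Inside $K_\bullet(\underline{\partial};M)=\operatorname{Cone}(\partial_n)$ sits the subcomplex $K_\bullet(\underline{\partial}';M_0)[-1]$, namely the copies of $\ker(\partial_n\colon C_\bullet\to C_\bullet)$ placed in the $C_\bullet[-1]$ summand of the cone; one checks this is closed under the cone differential, and that the quotient complex is exactly $\operatorname{Cone}(j)$, which is quasi-isomorphic to $\operatorname{coker}(j)=K_\bullet(\underline{\partial}';\bar M)$ since $j$ is an injection of complexes. The resulting short exact sequence of complexes
\[
0\to K_\bullet(\underline{\partial}';M_0)[-1]\to K_\bullet(\underline{\partial};M)\to \operatorname{Cone}(j)\to 0
\]
yields, on passing to homology (using $H_i(X[-1])=H_{i-1}(X)$ and $H_i(\operatorname{Cone}(j))=H_i(\underline{\partial}';\bar M)$), precisely the claimed long exact sequence
\[
\cdots\to H_{i-1}(\underline{\partial}';M_0)\to H_i(\underline{\partial};M)\to H_i(\underline{\partial}';\bar M)\to H_{i-2}(\underline{\partial}';M_0)\to\cdots .
\]
The same conclusion can be packaged as the octahedral axiom applied to $C_\bullet\xrightarrow{p}K_\bullet(\underline{\partial}';\partial_nM)\xrightarrow{j}C_\bullet$ (using $\operatorname{Cone}(p)\simeq K_\bullet(\underline{\partial}';M_0)[-1]$, $\operatorname{Cone}(j)\simeq K_\bullet(\underline{\partial}';\bar M)$, $\operatorname{Cone}(jp)\simeq K_\bullet(\underline{\partial};M)$), or extracted from the row-filtration spectral sequence of the two-column bicomplex $[\,C_\bullet\xrightarrow{\partial_n}C_\bullet\,]$, whose $E_2$-page has columns $H_\bullet(\underline{\partial}';\bar M)$ and $H_\bullet(\underline{\partial}';M_0)$ and whose only higher differential $d_2\colon H_i(\underline{\partial}';\bar M)\to H_{i-2}(\underline{\partial}';M_0)$ is exactly the connecting map above.

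The routine parts are the standard ``cone of an epimorphism $\simeq$ kernel shifted, cone of a monomorphism $\simeq$ cokernel'' identifications and the bookkeeping of degree shifts. The one genuinely delicate point — and the place where one could easily go wrong — is that one must use this particular factorization of $\partial_n$ (equivalently the row filtration of the bicomplex): the naive move, the plain mapping-cone sequence $0\to C_\bullet\to\operatorname{Cone}(\partial_n)\to C_\bullet[-1]\to 0$ coming from the column filtration, produces the \emph{other} long exact sequence, the one relating $H_i(\underline{\partial};M)$ to $H_\bullet(\underline{\partial}';M)$ via multiplication by $\partial_n$, not the sequence asserted in the lemma. So the real content is the observation that $\partial_n$ acting on $C_\bullet$ literally factors through $K_\bullet(\underline{\partial}';\partial_nM)$, with its kernel and cokernel complexes being the Koszul complexes of $M_0$ and $\bar M$.
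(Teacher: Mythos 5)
Your proof is correct. Note that the paper itself offers no argument here: its entire ``proof'' is a citation to Bj\"{o}rk \cite[Ch.~1, Prop.~4.13]{BJ} for the cohomological version, so you have supplied the content the authors omitted. Your derivation is the standard one underlying that reference: realize $K_\bullet(\underline{\partial};M)$ as $\operatorname{Cone}(\partial_n\colon C_\bullet\to C_\bullet)$ with $C_\bullet=K_\bullet(\underline{\partial}';M)$, and exploit the factorization of $\partial_n$ through $K_\bullet(\underline{\partial}';\partial_nM)$ to produce the short exact sequence of complexes
\[
0\to K_\bullet(\underline{\partial}';M_0)[-1]\to K_\bullet(\underline{\partial};M)\to \operatorname{Cone}(j)\to 0,
\]
whose long exact homology sequence, after the quasi-isomorphism $\operatorname{Cone}(j)\simeq K_\bullet(\underline{\partial}';\bar M)$, is exactly the asserted sequence; equivalently this is the row-filtration spectral sequence of the two-column bicomplex, with the degree $-2$ map being $d_2$. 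All the identifications you use check out: the pairwise commutativity of $\partial_1,\dots,\partial_n$ makes $M_0$ and $\bar M$ into $K[\partial_1,\dots,\partial_{n-1}]$-modules and makes $\ker$ and $\operatorname{coker}$ of $\partial_n$ on $C_\bullet$ the Koszul complexes of $M_0$ and $\bar M$, the proposed graded piece is indeed a subcomplex, and the degree shifts land where you say they do. You are also right to flag that the naive column-filtration (plain mapping-cone) sequence yields the \emph{other} long exact sequence, relating $H_i(\underline{\partial};M)$ to $H_\bullet(\underline{\partial}';M)$ via multiplication by $\partial_n$; identifying the correct filtration is precisely the point of the lemma.
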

\begin{proof}
See Proposition $4.13$ in chapter $1$ of \cite{BJ} for cohomological
version.
\end{proof}
\begin{remark}\label{H1-van-rem}
Notice from the  exact sequence in \ref{H1-van}
$H_0(\underline{\partial}^{'}; \bar{M})= H_0(\underline{\partial};
M).$
\end{remark}
\begin{corollary}\label{H1-van-cor}
(With the hypotheses  as in Lemma \ref{H1-van}). If $M_0=0$ then
\[
H_i(\underline{\partial}^{'}; \bar{M}) \cong
H_i(\underline{\partial}; M) \quad \text{ for all } \quad i.\]
\end{corollary}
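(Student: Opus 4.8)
The plan is to extract the isomorphism directly from the long exact sequence of Lemma \ref{H1-van}. First I would specialize that sequence to the hypothesis $M_0 = 0$. Here $M_0 = \ker(\partial_n \colon M \to M)$, so the hypothesis says precisely that $\partial_n$ acts injectively on $M$; and since $M_0 = 0$, every Koszul homology module $H_j(\underline{\partial}^{'}; M_0)$ is the homology of the Koszul complex of the zero module, hence vanishes for all $j$.

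Next I would isolate the stretch of the long exact sequence around $H_i(\underline{\partial}; M)$, namely
\[
H_{i-1}(\underline{\partial}^{'}; M_0) \to H_i(\underline{\partial}; M) \to H_i(\underline{\partial}^{'}; \bar{M}) \to H_{i-2}(\underline{\partial}^{'}; M_0).
\]
With $M_0 = 0$ the two flanking terms are $0$, so exactness forces the middle arrow $H_i(\underline{\partial}; M) \to H_i(\underline{\partial}^{'}; \bar{M})$ to be both injective and surjective. As $i$ is arbitrary, this yields $H_i(\underline{\partial}^{'}; \bar{M}) \cong H_i(\underline{\partial}; M)$ for all $i$, which is the assertion.

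There is no genuine obstacle here: the content is carried entirely by Lemma \ref{H1-van}, and the corollary is simply the degenerate case of its exact sequence. The only point to verify is that the connecting homomorphisms come with the index shifts as displayed in the lemma, so that killing $M_0$ annihilates exactly the terms adjacent to $H_i(\underline{\partial}; M)$ and $H_i(\underline{\partial}^{'}; \bar{M})$; this is read off from the statement. Alternatively one may argue at the chain level: when $\partial_n$ is a nonzerodivisor on $M$, the Koszul complex on $\partial_n$ resolves $\bar{M}$, so $K(\underline{\partial}; M)$ is quasi-isomorphic to $K(\underline{\partial}^{'}; \bar{M})$ and the isomorphism on homology follows; but the long-exact-sequence route is shorter and self-contained.
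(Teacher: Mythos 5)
Your proof is correct and is exactly the paper's argument: the paper simply says the claim is ``clear from the above long exact sequence in homology,'' and you have spelled out precisely why --- setting $M_0=0$ kills the flanking terms $H_{i-1}(\underline{\partial}^{'};M_0)$ and $H_{i-2}(\underline{\partial}^{'};M_0)$, forcing the middle map to be an isomorphism. Nothing further is needed.
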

\begin{proof}
Clear from the above long exact sequence in homology.
\end{proof}
\s Let $\ell(-)=\dim_K(-)$. With notation as in Lemma \ref{H1-van}.
\begin{align*}
\text{Set} \quad &\chi(\underline{\partial}, M)=
\sum^n_{i=0}(-1)^i\ell(H_i(\underline{\partial}; M)),\\
&\chi(\underline{\partial}^{'}, \bar{M})=
\sum^{n-1}_{i=0}(-1)^i\ell(H_i(\underline{\partial}^{'}; \bar{M})),\\
\text{and}\quad &\chi(\underline{\partial^{'}}, M_0)=
\sum^{n-1}_{i=0}(-1)^i\ell(H_i(\underline{\partial}^{'}; M_0)).
\end{align*}
\begin{lemma}\label{chi-equi}
With the notation as above,
\begin{align*}
\chi(\underline{\partial}, M)=\chi(\underline{\partial}^{'},
\bar{M})-\chi(\underline{\partial}^{'}, M_0).
\end{align*}
\end{lemma}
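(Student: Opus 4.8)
The statement to prove is Lemma~\ref{chi-equi}:
$$\chi(\underline{\partial}, M)=\chi(\underline{\partial}^{'}, \bar{M})-\chi(\underline{\partial}^{'}, M_0).$$

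This is an immediate consequence of the long exact sequence in Lemma~\ref{H1-van}. The approach is to use the standard fact that Euler characteristics are additive over (long) exact sequences of finite-dimensional vector spaces, provided all terms are finite-dimensional. Since $M$ is implicitly holonomic (the setting of the paper), all the homology modules appearing — $H_i(\underline{\partial};M)$, $H_i(\underline{\partial}';\bar M)$, $H_i(\underline{\partial}';M_0)$ — are finite-dimensional $K$-vector spaces, so all the $\chi$'s are well-defined finite numbers.

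The plan is as follows. First I would recall (or cite \cite{BJ}) that $M_0 = \ker(\partial_n\colon M\to M)$ and $\bar M = M/\partial_n M$ are holonomic $A_{n-1}(K)$-modules when $M$ is holonomic, so that $H_i(\underline{\partial}';\bar M)$ and $H_i(\underline{\partial}';M_0)$ are finite-dimensional; similarly $H_i(\underline{\partial};M)$ is finite-dimensional. Next I would write down the long exact sequence of Lemma~\ref{H1-van} explicitly as a single long exact sequence of finite-dimensional $K$-vector spaces. The key observation is that this sequence, read from the top, cycles through the three families of terms in the pattern $H_{i}(\underline{\partial}';\bar M)$, $H_{i-2}(\underline{\partial}';M_0)$, $H_{i-1}(\underline{\partial}';M)$ — wait, more precisely the connecting pattern is $H_{i-1}(\underline{\partial}';M_0)\to H_i(\underline{\partial};M)\to H_i(\underline{\partial}';\bar M)\to H_{i-2}(\underline{\partial}';M_0)\to\cdots$. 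One then truncates this at both ends (using that $H_i=0$ for $i<0$ and for $i>n$, resp. $i>n-1$) to obtain a finite exact sequence, and applies the alternating-sum-of-dimensions-is-zero principle.

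The one point requiring a little care is bookkeeping with signs. I would index the terms of the finite exact sequence consecutively and observe that the three subsequences $\{H_i(\underline{\partial};M)\}_i$, $\{H_i(\underline{\partial}';\bar M)\}_i$, $\{H_i(\underline{\partial}';M_0)\}_i$ occur with internal signs that, after the global alternating sign from the exact sequence is distributed, reproduce exactly $+\chi(\underline{\partial},M)$, $-\chi(\underline{\partial}';\bar M)$, $+\chi(\underline{\partial}';M_0)$ up to an overall sign — so that the vanishing of the total alternating sum rearranges precisely into the claimed identity $\chi(\underline{\partial},M)=\chi(\underline{\partial}';\bar M)-\chi(\underline{\partial}';M_0)$. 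Concretely, because $H_i(\underline{\partial};M)$ sits between $H_{i-1}(\underline{\partial}';M_0)$ and $H_i(\underline{\partial}';\bar M)$ in each period of length three, the three homologies with a fixed value of $i$ all appear with a consistent relative sign pattern, and the telescoping yields the result.

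The main (and only genuine) obstacle is not mathematical depth but making sure the long exact sequence really does extend infinitely in the stated repeating pattern and terminates correctly at both ends so that the Euler characteristic argument is valid; once that is pinned down, the conclusion is a formal consequence of additivity of Euler characteristics. I do not expect any serious difficulty.
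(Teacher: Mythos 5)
Your proposal is correct and is exactly the paper's argument: the paper's proof is the one-line "apply $\ell(-)$ to the exact sequence in Lemma \ref{H1-van}," and your elaboration (finite-dimensionality from holonomicity, truncation of the long exact sequence, and the sign bookkeeping showing $\chi(\underline{\partial},M)+\chi(\underline{\partial}',M_0)-\chi(\underline{\partial}',\bar M)=0$) is just that step carried out in detail.
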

\begin{proof}
Apply $\ell(-)$ to the exact sequence in Lemma \ref{H1-van}.
\end{proof}

 \s\label{Mayer-2} Let $I$ be an
ideal in $R$ and $f\in R.$ Then we have an exact sequence
\begin{center}
$\cdots \rightarrow (H^{i-1}_{I}(R))_f\rightarrow
H^i_{(I,f)}(R)\longrightarrow H^i_{I}(R)\longrightarrow
(H^i_{I}(R))_f\rightarrow \cdots$
\end{center}
of $A_n(K)-$modules. For proof see Lemma $1.6$  of \cite{TJ}.

The following is probably already known. We give a proof due to lack
of a reference.
\begin{lemma}\label{holon-M[z]}
Let $M$ be an $A_n(K)$-module and let $A=R[z]$. Set
$M[z]=\bigoplus_{i\geq 0}Mz^i$ and $M[z,z^{-1}]=\bigoplus_{i\in
\mathbb{Z}}Mz^i$. Then $M[z]$ and $M[z,z^{-1}]$ can be given
$A_{n+1}(K)$-module structure. Furthermore if $M$ is holonomic
$A_n(K)$-module then $M[z]$ and $M[z,z^{-1}]$ are holonomic
$A_{n+1}(K)$-modules.
\end{lemma}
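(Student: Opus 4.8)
The plan is first to write down the $A_{n+1}(K)$-action explicitly, and then to reduce the holonomicity assertion to the corresponding (well-known) statements for the $A_1(K)$-modules $K[z]$ and $K[z,z^{-1}]$ via an external tensor product. Recall $A_{n+1}(K) = K\langle X_1,\dots,X_n,z,\partial_1,\dots,\partial_n,\partial_z\rangle$. On $M[z] = \bigoplus_{i\ge 0} Mz^i$ I let each $X_j$ and each $\partial_j$ act through the given $A_n(K)$-structure on the coefficients (so they preserve the $z$-grading), let $z$ act by $m z^i \mapsto m z^{i+1}$, and let $\partial_z$ act by the formal derivative $m z^i \mapsto i\, m z^{i-1}$. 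One then checks the Weyl relations: $[\partial_j,X_j]=1$ is inherited from $M$; $z$ and $\partial_z$ commute with every $X_j$ and $\partial_j$ since the latter act only on coefficients; and $[\partial_z, z]=1$ from $\partial_z(z\cdot mz^i) - z\cdot\partial_z(mz^i) = (i+1)mz^i - i\,mz^i = mz^i$. The same formulas, with $i$ now ranging over $\Z$, make $M[z,z^{-1}]$ an $A_{n+1}(K)$-module. The point to record is that under the standard identification $A_{n+1}(K)\cong A_n(K)\otimes_K A_1(K)$ these modules are exactly the external tensor products $M\otimes_K K[z]$ and $M\otimes_K K[z,z^{-1}]$, where $K[z]$ and $K[z,z^{-1}]$ carry their natural $A_1(K)$-structure with $\partial_z$ acting as differentiation.

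For holonomicity I would invoke two facts. First, $K[z]$ and $K[z,z^{-1}]=K[z]_z$ are holonomic $A_1(K)$-modules: $K[z]$ is $\mathcal{O}$ on the affine line, and $K[z,z^{-1}]$ is a localization of it at a single element, and localization preserves holonomicity. Second, the external tensor product over $K$ of a holonomic $A_n(K)$-module with a holonomic $A_m(K)$-module is a holonomic $A_{n+m}(K)$-module, since the Bernstein dimension is additive under $\otimes_K$; see \cite[Chapter 1]{BJ}. Applying this with $m=1$ gives immediately that $M[z]$ and $M[z,z^{-1}]$ are holonomic over $A_{n+1}(K)$ whenever $M$ is holonomic over $A_n(K)$.

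If one prefers to avoid quoting additivity of Bernstein dimension, the alternative is hands-on. Pick a good filtration $\{F_p M\}_{p\ge 0}$ of $M$ with respect to the Bernstein filtration of $A_n(K)$, so that $\dim_K F_p M$ is eventually a polynomial in $p$ of degree $n$, and set $F_p(M[z]) = \sum_{i=0}^{p} (F_{p-i}M)\,z^i$. A direct check shows this is a good filtration of $M[z]$ over $A_{n+1}(K)$, and $\dim_K F_p(M[z]) = \sum_{i=0}^p \dim_K F_{p-i}M$ is eventually polynomial of degree $n+1$; hence $M[z]$ has Bernstein dimension $n+1$ and is holonomic. For $M[z,z^{-1}]$ the naive filtration is not bounded below in $i$, so I would instead use $M[z,z^{-1}] = (M[z])_z$ together with the fact that localizing a holonomic module at one element keeps it holonomic.

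The only real points needing care are, in the hands-on route, the bookkeeping that the chosen filtration on $M[z]$ is genuinely good — i.e. $F_p(A_{n+1})\cdot F_q(M[z])\subseteq F_{p+q}(M[z])$ and finite generation over the associated Rees algebra — and, in the tensor-product route, pinning down a clean reference for additivity of Bernstein dimension under external tensor product; everything else is routine verification of the Weyl relations and an elementary dimension count.
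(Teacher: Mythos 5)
Your proposal is correct, and your ``hands-on'' fallback is in fact exactly the paper's argument: the authors define the same action ($z\cdot mz^i = mz^{i+1}$, $\partial_z\cdot mz^i = imz^{i-1}$), take a good filtration $(F_\upsilon)$ of $M$ compatible with the Bernstein filtration, and set $G_\upsilon = F_\upsilon \oplus F_{\upsilon-1}z\oplus\cdots\oplus F_0z^{\upsilon}$ --- precisely your $F_p(M[z])=\sum_{i=0}^p(F_{p-i}M)z^i$ --- then read off $\dim M[z]=\dim M+1=n+1$ from the generating function $\sum_\upsilon \ell(G_\upsilon/G_{\upsilon-1})z^\upsilon = (1-z)^{-1}\sum_\upsilon \ell(F_\upsilon/F_{\upsilon-1})z^\upsilon$, and finally handle $M[z,z^{-1}]=M[z]_z$ by the theorem that localization at one element preserves holonomicity (\cite[Theorem 1.5.5]{BJ}), exactly as you do. Your primary route via the identification $M[z]\cong M\otimes_K K[z]$ and $M[z,z^{-1}]\cong M\otimes_K K[z,z^{-1}]$ under $A_{n+1}(K)\cong A_n(K)\otimes_K A_1(K)$ is a genuinely different and arguably cleaner packaging: it replaces the explicit filtration bookkeeping by the single standard fact that the external tensor product of holonomic modules is holonomic (for which the upper bound $d(M\boxtimes N)\le d(M)+d(N)$ from the product filtration plus the Bernstein inequality suffices), at the cost of needing that reference rather than an elementary count. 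Both routes still lean on the same localization theorem for the Laurent case, and neither you nor the paper writes out the verification that the filtration on $M[z]$ is good (generation of the associated graded module), which is the one routine point worth recording if you want the filtration argument to be self-contained.
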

\begin{proof}
Let $mz^i\in M[z]$ with $m\in M$. Define $z(mz^i)=mz^{i+1}$ for all
$i\geq 0$ and
\[
\partial_z(mz^i) =
\begin{cases} 0 & \text{if} \ i = 0 \\ imz^{i-1}& \text{if} \ i > 0.
\end{cases}
\]
One can easily check that with this action $M[z]$ becomes an
$A_{n+1}(K)$-module.  So  $M[z,z^{-1}]= M[z]_z$ is also an
$A_{n+1}(K)$-module.

 Let $M$ be a holonomic
$A_n(K)$-module. Then there exist a good filtration
$\mathcal{F}=(F_\upsilon)$ on $M$ which is compatible with Bernstein
filtration $\Gamma_\upsilon$ of $A_n(K) $ (see Proposition $2.7$
chapter $1$ of \cite{BJ}). Also there exist rational numbers
$a_0,\cdots, a_n$ such that
\begin{center}
$\ell(F_{\upsilon})=a_n\upsilon^n+\cdots+a_{1}\upsilon +a_0$ for all
$\upsilon \gg 0.$
\end{center}

Now define $G_{\upsilon}=F_{\upsilon}\oplus F_{\upsilon-1}z\oplus
F_{\upsilon-2}z^2\oplus \cdots\oplus F_oz^{\upsilon}.$ One  can
easily check that $G=(G_{\upsilon})$ is a filtration on $M[z]$ which
is compatible with the Bernstein filtration of $A_{n+1}(K).$

Note that
\begin{align*}
G_{\upsilon}/G_{\upsilon -1}=F_{\upsilon}/F_{\upsilon-1} \oplus
\cdots \oplus F_1/F_0 \oplus F_0.
\end{align*}
So
\begin{align*}
\sum_{\upsilon=0}^{\infty}\ell(G_{\upsilon}/G_{\upsilon
-1})z^{\upsilon}
&=1/(1-z)\sum_{\upsilon=0}^{\infty}\ell(F_{\upsilon}/F_{\upsilon
-1})z^{\upsilon}.\\
\Rightarrow \dim M[z]&= \dim M+1\\
&=n+1.
\end{align*}
So $M[z]$ is a holonomic $A_{n+1}(K)-$ module. Also by
 \cite[Theorem 1.5.5]{BJ},  $M[z,
 z^{-1}]$ is a holonomic $A_{n+1}(K)$-modules.
\end{proof}
\section{Curves}
The main result of this section is Theorem \ref{Main-Thrm-1}. The
following three results are well-known. We include here due to lack
of a reference.
\begin{lemma}\label{sngle-Ass-Lem}
Let $R=K[x_1, \cdots, x_n], $ and $P\subset R$ be a prime ideal of
height $g.$ Then Ass$_R(H_P^g(R))=\{P\}.$
\end{lemma}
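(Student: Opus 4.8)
The plan is to prove the equality by establishing the two containments $\{P\}\subseteq\Ass_R(H^g_P(R))$ and $\Ass_R(H^g_P(R))\subseteq\{P\}$ separately, starting from the elementary observation that $H^g_P(R)$ is a $P$-torsion module, so $\operatorname{Supp}_R(H^g_P(R))\subseteq V(P)$ and hence every associated prime of $H^g_P(R)$ contains $P$. It therefore suffices to check two things: (a) that $P$ itself occurs as an associated prime, and (b) that no prime strictly containing $P$ occurs.

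For (a) I would localize at $P$. Since local cohomology commutes with localization, $H^g_P(R)_P\cong H^g_{PR_P}(R_P)$, and $R_P$ is a regular local ring of dimension $\htt P = g$ with maximal ideal $PR_P$. Thus this is the top local cohomology of a Cohen--Macaulay local ring of dimension $g$, so it is nonzero (by local duality it is in fact the injective hull $E_{R_P}(R_P/PR_P)$). Now a nonzero module over the Noetherian ring $R_P$ has a nonempty set of associated primes, and $\Ass_{R_P}(H^g_P(R)_P) = \{\,\mathfrak{q}R_P : \mathfrak{q}\in\Ass_R(H^g_P(R)),\ \mathfrak{q}\subseteq P\,\}$; so there is $\mathfrak{q}\in\Ass_R(H^g_P(R))$ with $\mathfrak{q}\subseteq P$, and combined with $\mathfrak{q}\supseteq P$ we get $\mathfrak{q}=P$.

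For (b), suppose for contradiction that $Q\in\Ass_R(H^g_P(R))$ with $Q\supsetneq P$. Localizing at $Q$ (and replacing $R$ by $R_Q$) I may assume $R$ is regular local with maximal ideal $\m=Q$; then $d:=\dim R = \htt Q > g$ and $\m\in\Ass_R(H^g_P(R))$, i.e. $H^0_{\m}(H^g_P(R))\neq 0$. The key input is the Grothendieck spectral sequence attached to the identification of functors $\Gamma_{\m}\cong\Gamma_{\m}\circ\Gamma_P$ (valid since $V(\m)\subseteq V(P)$, and $\Gamma_P$ sends injectives to injectives, hence to $\Gamma_{\m}$-acyclics),
\[
E_2^{p,q}=H^p_{\m}\bigl(H^q_P(R)\bigr)\ \Longrightarrow\ H^{p+q}_{\m}(R).
\]
Since $R$ is Cohen--Macaulay we have $\grade(P,R)=\htt P=g$, so $H^q_P(R)=0$ for every $q<g$, and the whole range $q<g$ of the $E_2$-page vanishes. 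Hence no nonzero differential can enter or leave the corner term $E_r^{0,g}$ for any $r\ge 2$: an incoming $d_r$ starts from $E_r^{-r,\,g+r-1}=0$, and an outgoing $d_r$ lands in $E_r^{r,\,g-r+1}$, which is zero because $g-r+1<g$. Therefore $H^0_{\m}(H^g_P(R))=E_2^{0,g}=E_\infty^{0,g}$ is a subquotient of $H^g_{\m}(R)$. But $R$ is Cohen--Macaulay of dimension $d>g$, so $H^g_{\m}(R)=0$, forcing $H^0_{\m}(H^g_P(R))=0$ --- a contradiction, which proves (b).

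I expect the only delicate point to be the edge-term bookkeeping on the spectral sequence in (b) --- verifying that $E^{0,g}$ is a permanent cycle onto which no nonzero differential maps --- but this reduces to the inequalities $-r<0$ and $g-r+1<g$ and is routine. Everything else is standard: local cohomology commutes with localization; the localization formula for $\Ass$; the vanishing $H^i_P(R)=0$ for $i<\grade(P,R)$ and $H^i_{\m}(R)=0$ for $i<\dim R$ over Cohen--Macaulay rings; and the existence of the composition spectral sequence. If one prefers to avoid spectral sequences, an alternative for (b) is to use the presentation $H^g_P(R)=\varinjlim_n\Ext^g_R(R/P^n,R)$, whose transition maps are injective precisely because $\grade(P,R)=g$ (so that $H^g_P(R)$ is the increasing union of the $\Ext^g_R(R/P^n,R)$), and then to analyze the associated primes of the finitely generated modules $\Ext^g_R(R/P^n,R)$ via local duality; but the spectral-sequence argument above is shorter and self-contained.
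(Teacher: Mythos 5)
Your proposal is correct, but the second half takes a genuinely different route from the paper. For the containment $\{P\}\subseteq \Ass_R(H^g_P(R))$ both arguments are essentially identical: localize at $P$, identify $(H^g_P(R))_P$ with $H^g_{PR_P}(R_P)$ (the paper notes explicitly that this is $E_{R_P}(R_P/PR_P)$), and use the localization formula for $\Ass$ together with the fact that every associated prime contains $P$ by $P$-torsionness. For the reverse containment the paper argues much more elementarily: given $Q\supsetneq P$ associated, pick $f\in Q\setminus P$, invoke the Mayer--Vietoris-type sequence $H^g_{(P,f)}(R)\to H^g_P(R)\to (H^g_P(R))_f$, observe $H^g_{(P,f)}(R)=0$ because $\grade(P,f)=\htt(P,f)=g+1>g$, conclude that the localization map $H^g_P(R)\to (H^g_P(R))_f$ is injective, and derive a contradiction from $fr=0$ with $(0:r)=Q$. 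You instead localize at $Q$ and run the composite-functor spectral sequence $E_2^{p,q}=H^p_{\m}(H^q_P(R))\Rightarrow H^{p+q}_{\m}(R)$, using $H^q_P(R)=0$ for $q<\grade P=g$ to see that $E_2^{0,g}=H^0_{\m}(H^g_P(R))$ is a permanent cycle sitting inside $H^g_{\m}(R)=0$. Your edge-term bookkeeping is right, and the auxiliary facts you use (injectives stay injective under $\Gamma_P$, $\grade = \htt$ over a Cohen--Macaulay ring, the vanishing of $H^g_{\m}$ below the dimension) are all standard; note that both proofs ultimately rest on the same grade-sensitivity of local cohomology, yours packaged in the spectral sequence and the paper's in the vanishing of $H^g_{(P,f)}(R)$. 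The paper's version is more self-contained and avoids spectral sequences, which fits the expository tone of its preliminaries; yours generalizes more readily (it shows with no extra work that $\Ass H^{\grade I}_I(R)$ consists of minimal primes of $I$ for any ideal $I$ in a Cohen--Macaulay ring).
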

\begin{proof}
First note that $(H_P^g(R))_P=H^g_{PR_P}(R_P)\cong
E_{R_P}(R_P/PR_P).$ As $PR_P\in$ Ass$_{R_P}E_{R_P}$ $(R_P/PR_P),$ we
have $PR_P\in$ Ass$_{R_P}(H_P^g(R))_P$ and so $P\in$
Ass$_R(H_P^g(R))$. Let $Q\in$ Ass$_R(H_P^g(R))$, so $Q=(0:r)$ for
some non-zero $r\in H_P^g(R)$. As $H^g_P(R)$ is $P-$ torsion we get
$P\subseteq Q$. If $P\neq Q$, choose $f\in Q$ such that $f\not \in
P.$ We have an exact sequence of the form
\begin{center}
 $H^g_{(P,f)}(R)\rightarrow H^g_P(R)\rightarrow (H^g_P(R))_f$.
\end{center}
  As ht$(P,f)=$ht$(P)+1,$ we get $H^g_{(P,f)}(R)=0$. Therefore  we have  an
exact sequence $0\rightarrow H^g_P(R)\stackrel{\psi}\longrightarrow
(H^g_P(R))_f$, where $\psi(m)=m/1$.  But $fr=0$ which implies
$r/1=0,$ which is a contradiction as $\psi$ is injective. Hence
$Q=P$ and Ass$_R(H_P^g(R))=\{P\}.$
\end{proof}

The following result is an easy consequence of the  Hartshorne-
Liechtenbaum vanishing theorem.
\begin{lemma}\label{HLV}
 Let $(R,\mathfrak m)$ be a regular local ring of dimension $n$. Let $I$
 be an ideal in $R$ which is not $\mathfrak m$-primary, then $H_I^n(R)=0.$
\end{lemma}
\begin{proof}
Let $\hat{R}$ be the $\mathfrak m$-adic completion of $R$. Then
$H_I^n(R)\bigotimes _R \hat{R}=H_{I\hat{R}}^n(\hat{R})$. As $I
\hat{R}$ is not $\hat{\mathfrak m}$ primary,  by Hartshorne-
Liechtenbaum vanishing theorem \cite[14.1]{SI} we get
$H_{I\hat{R}}^n(\hat{R})=0$. As $\hat{R}$ is faithfully flat $R$
module, we get  $H_I^n(R)=0.$
\end{proof}
As a consequence of \ref{HLV} we obtain
\begin{corollary}\label{HLV-1}
Let $R=K[x_1, \cdots, x_n]$, and $P\subset R$ be a prime ideal of
height $n-1$, $i.e.$ dim$\frac{R}{P}=1. $ Then $H_P^n(R)=0.$
\end{corollary}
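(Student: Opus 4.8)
The plan is to reduce the assertion to the local situation and then apply Lemma \ref{HLV}. Since $H^n_P(R)$ is an $R$-module, it is zero if and only if $(H^n_P(R))_{\mathfrak m} = 0$ for every maximal ideal $\mathfrak m$ of $R$. By flat base change for local cohomology one has $(H^n_P(R))_{\mathfrak m} \cong H^n_{PR_{\mathfrak m}}(R_{\mathfrak m})$, so it suffices to prove $H^n_{PR_{\mathfrak m}}(R_{\mathfrak m}) = 0$ for each maximal ideal $\mathfrak m \supseteq P$ (when $P \not\subseteq \mathfrak m$ the localization $(H^n_P(R))_{\mathfrak m}$ is zero trivially, since $PR_{\mathfrak m} = R_{\mathfrak m}$).

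Next I would observe that $R_{\mathfrak m}$ is a regular local ring and that $\dim R_{\mathfrak m} = n$: indeed $R = K[x_1,\ldots,x_n]$ is a regular (hence Cohen-Macaulay, equidimensional, catenary) domain of dimension $n$, so every maximal ideal of $R$ has height $n$. Thus Lemma \ref{HLV} is applicable to the ring $R_{\mathfrak m}$, and to conclude $H^n_{PR_{\mathfrak m}}(R_{\mathfrak m}) = 0$ it only remains to check that $PR_{\mathfrak m}$ is not $\mathfrak m R_{\mathfrak m}$-primary.

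For this last point: since $\operatorname{ht}P = n-1 < n = \operatorname{ht}\mathfrak m$, we have $P \neq \mathfrak m$, so $\mathfrak m/P$ is a nonzero maximal ideal of the one-dimensional domain $R/P$; hence $\dim (R/P)_{\mathfrak m} = 1 > 0$. An $\mathfrak m R_{\mathfrak m}$-primary ideal would force this dimension to be $0$, so $PR_{\mathfrak m}$ is not $\mathfrak m R_{\mathfrak m}$-primary. Lemma \ref{HLV} then gives $H^n_{PR_{\mathfrak m}}(R_{\mathfrak m}) = 0$ for all maximal ideals $\mathfrak m$, whence $H^n_P(R) = 0$.

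The only thing requiring care — rather than a genuine obstacle — is the dimension bookkeeping: making sure that localizing the polynomial ring at a maximal ideal produces a regular local ring of dimension exactly $n$, so that the hypothesis of Lemma \ref{HLV} is literally matched, and that the passage to $R_{\mathfrak m}$ does not accidentally make $P$ into an $\mathfrak m R_{\mathfrak m}$-primary ideal. Once these are verified, the statement is a formal consequence of flat base change together with Lemma \ref{HLV}.
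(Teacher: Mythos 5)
Your proof is correct and follows essentially the same route as the paper: reduce to localizations at maximal ideals (the case $P\not\subseteq\mathfrak m$ being trivial), note that $R_{\mathfrak m}$ is regular local of dimension $n$ and that $PR_{\mathfrak m}$ is not $\mathfrak m R_{\mathfrak m}$-primary, and invoke Lemma \ref{HLV}. The extra bookkeeping you include (height of maximal ideals, non-primariness of $PR_{\mathfrak m}$) is exactly what the paper leaves implicit.
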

\begin{proof}
Let $E:= H_P^n(R)$, then  $E$ is $P-$torsion. Let $\mathfrak m$ be
 a maximal ideal of $R$. If $P\nsubseteq \mathfrak m$, then
$E_{\mathfrak m}=0.$ If $P\subset \mathfrak m$, then $E_{\mathfrak
m}=H_{PR_{\mathfrak m}}^n(R_{\mathfrak m})=0$ by Lemma \ref{HLV}. So
$E=0.$
\end{proof}
\begin{lemma}\label{z-M-regular-Lem}
Let $M$ be a holonomic $A_n(K)$ module, and $z=x_n$ be $M-$regular.
Then $H_1(\partial_z; H_z^1(M) )=0,$ and so
\begin{center}
 $H_1(\partial_z; M)\cong
H_1(\partial_z; M_z), $
\end{center}
 and we have an exact
sequence
\begin{center}
 $0\rightarrow H_0(\partial_z;
M)\rightarrow H_0(\partial_z; M_z)\rightarrow H_0(\partial_z;
H_z^1(M))\rightarrow 0.$
\end{center}
\end{lemma}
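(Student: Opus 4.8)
The plan is to deduce the lemma from the short exact sequence furnished by local cohomology with respect to the single element $z = x_n$. Since $z$ is $M$-regular, the $z$-torsion submodule of $M$ is zero, so $H^0_z(M) = 0$, and the two-term complex $0 \to M \to M_z \to 0$ yields a short exact sequence of $A_n(K)$-modules
\[
0 \longrightarrow M \longrightarrow M_z \longrightarrow H^1_z(M) \longrightarrow 0 ,
\]
with $H^1_z(M) = M_z/M$. Here $M_z$ is an $A_n(K)$-module because $z = x_n$, the action of $\partial_z = \partial_n$ being given by the quotient rule $\partial_z(m z^{-k}) = (\partial_z m) z^{-k} - k\, m z^{-k-1}$, and $M \hookrightarrow M_z$ since $M$ is $z$-torsion-free. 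As the three maps above are $A_n(K)$-linear they commute with the $K$-linear operator $\partial_z$, so applying the two-term Koszul complex $K(\partial_z; -)$ gives a short exact sequence of complexes, whose long exact homology sequence reads
\begin{multline*}
0 \to H_1(\partial_z; M) \to H_1(\partial_z; M_z) \to H_1(\partial_z; H^1_z(M)) \\
\to H_0(\partial_z; M) \to H_0(\partial_z; M_z) \to H_0(\partial_z; H^1_z(M)) \to 0 .
\end{multline*}
Both assertions of the lemma follow at once once we know the middle term $H_1(\partial_z; H^1_z(M)) = \ker\!\left(\partial_z \colon M_z/M \to M_z/M\right)$ vanishes.

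So the crux is to show that $\partial_z$ acts injectively on $M_z/M$; this is exactly where the characteristic-zero hypothesis enters. Given a nonzero class $\overline{n} \in M_z/M$, write $n = m z^{-k}$ with $m \in M$ and $k \geq 0$ minimal with this property. Since $\overline{n} \neq 0$ we have $k \geq 1$, and minimality forces $m \notin zM$ (otherwise $m = z m'$ and $n = m' z^{-(k-1)}$). By the quotient rule, $\partial_z n = (\partial_z m) z^{-k} - k\, m z^{-k-1} = \left(z\, \partial_z m - k m\right) z^{-k-1}$. If $\overline{\partial_z n} = 0$, i.e. $\partial_z n \in M$, then, identifying $M$ with its image in $M_z$ (legitimate by $z$-torsion-freeness), $z\, \partial_z m - k m \in z^{k+1} M \subseteq z M$; combined with $z\, \partial_z m \in zM$ this gives $k m \in zM$, and since $k$ is a unit of $K$ we conclude $m \in zM$, a contradiction. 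Hence $\partial_z$ is injective on $M_z/M$ and $H_1(\partial_z; H^1_z(M)) = 0$.

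Substituting $H_1(\partial_z; H^1_z(M)) = 0$ into the six-term sequence, the part $0 \to H_1(\partial_z; M) \to H_1(\partial_z; M_z) \to 0$ gives the isomorphism $H_1(\partial_z; M) \cong H_1(\partial_z; M_z)$, and the remaining part is precisely the claimed short exact sequence $0 \to H_0(\partial_z; M) \to H_0(\partial_z; M_z) \to H_0(\partial_z; H^1_z(M)) \to 0$. The step I expect to require the most care is the homological bookkeeping around the $A_n(K)$-module structure on $M_z$ — checking the quotient-rule formula for $\partial_z$ and the $A_n(K)$-linearity of the localization and quotient maps, so that $\partial_z$ really does induce an endomorphism of the Koszul complexes — but this is routine. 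I note that holonomicity of $M$ is not used in the argument; it is presumably retained only to guarantee that $M_z$, and hence the homology spaces in question, remain holonomic (in particular finite-dimensional).
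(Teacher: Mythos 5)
Your proof is correct and follows essentially the same route as the paper: both reduce via the short exact sequence $0 \to M \to M_z \to H^1_z(M) \to 0$ to showing $\partial_z$ is injective on $M_z/M$, and both establish this by the quotient rule together with invertibility of the integer exponent in characteristic zero (your minimal-$k$ contradiction is just the contrapositive packaging of the paper's descending induction on the exponent). Your closing remark that holonomicity is not needed for this lemma is accurate.
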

\begin{proof}
 Notice that we have an exact sequence $0\rightarrow M
\rightarrow M_z \rightarrow H_z^1(M)\rightarrow 0 $. It suffices to
prove $H_1(\partial_z; H_z^1(M) )=0$.

  Let $\xi \in H_z^1(M) $ such that
$\partial_ z(\xi)=0.$ As $H_z^1(M)\cong M_z/M$, let $\xi
=\bar{m}/z^i$ for some $i\geq 0.$ Now
\begin{align*}
\partial_z(\xi)=0 \Rightarrow
\partial _z(m/z^i)&\in M\\
i.e. \quad \partial_z(m/z^i)&=n/1.\\
\text{ Therefore}\quad  \partial_z(m)/z^i  - i(m/z^{i+1})&=n/1.\\
\text{ Multiplying  by}\quad z \quad \text{we get}\quad
  m/z^i=n'/1+ & n_1/z^{i-1}.
\end{align*}
This shows $\xi =\bar{m}/z^i=\bar{n_1}/z^{i-1}$.
 Continuing in this way we get
 \begin{center}
$\xi=\bar{m}/z^i=\bar{n_1}/z^{i-1}=\bar{n_2}/z^{i-2}=\cdots
=\bar{n_i}/1=0$ in $M_z/M,$ where $n_i, m\in M.$
\end{center}
Hence $H_1(\partial_z; H_z^1(M) )=0.$
\end{proof}
\begin{definition}\label{def}
Let  $\mathcal{C}$ be an affine curve in $\mathbb{A}^n(K)$ and  let
$\hat{\mathcal{C}}$ be it's projective closure in $\mathbb{P}^n(K).$
Set degree$(\mathcal{C})$=degree$(\hat{\mathcal{C}}).$
\end{definition}
The main result of this section is:
\begin{theorem}\label{Main-Thrm-1}
Let  $R=K[x_1, \cdots, x_n]$, $P\subset R$ be a  prime ideal of
height $n-1$,  and let $z$ be a sufficiently general linear form in
$x_i^,s$. Then $\chi((H_P^{n-1}(R))_z)-
\chi(H_P^{n-1}(R))=e=$degree$(\mathcal{C})$, where
$\mathcal{C}=V(P).$
\end{theorem}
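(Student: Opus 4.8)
\emph{Overview.} The plan is to identify $M_z/M$ (where $M:=H^{n-1}_P(R)$) with a local cohomology module supported on the finite set $V(P,z)$, and then to compute its De~Rham Euler characteristic one point at a time.

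\emph{Set-up and the one geometric input.} By Lemma~\ref{sngle-Ass-Lem} we have $\Ass_R M=\{P\}$, so the zerodivisors on $M$ form the ideal $P$; since $\htt P=n-1<n$, a sufficiently general linear form $z$ does not lie in $P$, hence is a nonzerodivisor on $M$ and $H^0_z(M)=0$. I also want, for $z$ sufficiently general, that $V(P,z)$ be a \emph{reduced} subscheme of $\mathbb{A}^n(K)$ consisting of exactly $e=\deg(\mathcal C)$ points $p_1,\dots,p_e$ (automatically $K$-rational). This is Bertini's theorem together with a dimension count: the general hyperplane section of the projective curve $\widehat{\mathcal C}$ consists of $\deg(\widehat{\mathcal C})=e$ reduced points, and a general hyperplane also misses the finitely many points of $\widehat{\mathcal C}$ at infinity. (Translations $x_i\mapsto x_i-c_i$ are automorphisms of both $R$ and $A_n(K)$ fixing every $\partial_i$ and changing neither side of the claimed identity, so one may also move $\mathcal C$ off an inconvenient point; this is the only place characteristic zero enters.) These are the only properties of $z$ that will be used.

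\emph{The short exact sequence and reduction to a computation.} Next I would feed the standard vanishings into the exact sequence~\ref{Mayer-2} for the ideal $P$ and the element $z$. Since $R$ is Cohen--Macaulay and $\htt(P,z)=n$, we have $\grade(P,z)=n$, so $H^i_{(P,z)}(R)=0$ for $i\le n-1$; and $H^n_P(R)=0$ by Corollary~\ref{HLV-1}. These collapse the long exact sequence to
\[
0\longrightarrow M\longrightarrow M_z\longrightarrow H^n_{(P,z)}(R)\longrightarrow 0 .
\]
Here $M$ and $H^n_{(P,z)}(R)$ are holonomic $A_n(K)$-modules by Lyubeznik's theorem, hence so is $M_z$ (the Bernstein class is closed under extensions, see \cite{BJ}); therefore all the De~Rham homology modules appearing below are finite-dimensional over $K$. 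Applying Koszul homology $H_i(\underline{\partial};-)$ to this sequence and taking the alternating sum of dimensions along the resulting long exact sequence yields
\[
\chi(M_z)-\chi(M)=\chi\!\left(H^n_{(P,z)}(R)\right).
\]

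\emph{The local computation.} Finally I would evaluate the right-hand side. Because $\sqrt{(P,z)}=\mathfrak{m}_{p_1}\cap\cdots\cap\mathfrak{m}_{p_e}$ with the $\mathfrak{m}_{p_j}$ pairwise comaximal, $\Gamma_{(P,z)}=\bigoplus_{j}\Gamma_{\mathfrak{m}_{p_j}}$, and since $R$ is Cohen--Macaulay of dimension $n$ (so $H^i_{\mathfrak{m}_{p_j}}(R)=0$ for $i<n$),
\[
H^n_{(P,z)}(R)=\bigoplus_{j=1}^{e}H^n_{\mathfrak{m}_{p_j}}(R)=\bigoplus_{j=1}^{e}E_R(R/\mathfrak{m}_{p_j}).
\]
For each $j$ the translation automorphism of $A_n(K)$ centred at $p_j$ carries $E_R(R/\mathfrak{m}_{p_j})$ to $E_R(R/\mathfrak{m})=H^n_{\mathfrak{m}}(R)$ while fixing each $\partial_i$, hence preserves the De~Rham complex; and $H^n_{\mathfrak{m}}(R)\cong K[\partial_1,\dots,\partial_n]$ as a module over $K[\partial_1,\dots,\partial_n]$, on which $\partial_1,\dots,\partial_n$ is a regular sequence, so $H_i(\underline{\partial};H^n_{\mathfrak{m}}(R))$ is $K$ for $i=0$ and $0$ otherwise. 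Thus $\chi(E_R(R/\mathfrak{m}_{p_j}))=1$ for every $j$, whence $\chi(H^n_{(P,z)}(R))=e$, and combining with the previous step, $\chi(M_z)-\chi(M)=e=\deg(\mathcal C)$.

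\emph{Main obstacle.} The only genuinely delicate step is the geometric input: guaranteeing that $V(P,z)$ is reduced and lies entirely in the affine chart for $z$ sufficiently general. Genericity of $z$ and characteristic zero are both essential here (in positive characteristic a general hyperplane section of a curve need not be reduced), and if one insists that $z$ be a \emph{homogeneous} linear form one must in addition check that $\widehat{\mathcal C}$ has no singular point at the origin, or first translate coordinates. Everything after that is formal: the short exact sequence is read off directly from~\ref{Mayer-2} together with the standard grade and Hartshorne--Lichtenbaum vanishings, and $\chi(E_R(R/\mathfrak{m}_p))=1$ is the familiar statement that $H^n_{\mathfrak{m}}(R)$ is $K[\partial_1,\dots,\partial_n]$ with the $\partial_i$ acting as a regular sequence.
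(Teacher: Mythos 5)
Your proof is correct, and its skeleton is the same as the paper's: reduce to the short exact sequence $0\rightarrow H^{n-1}_P(R)\rightarrow (H^{n-1}_P(R))_z\rightarrow H^n_{(P,z)}(R)\rightarrow 0$ coming from \ref{Mayer-2} together with $\grade(P,z)=n$ and Corollary \ref{HLV-1}, and then show the rightmost term contributes $e$. Where you diverge is in the bookkeeping: the paper first takes Koszul homology with respect to $\partial_z$ alone (Lemma \ref{z-M-regular-Lem}), obtains an isomorphism on $H_1(\partial_z;-)$ and a short exact sequence on $H_0(\partial_z;-)$, and only then sums over the remaining $\partial_1,\dots,\partial_{n-1}$ using Lemma \ref{chi-equi}; you instead apply $H_*(\underline{\partial};-)$ to the short exact sequence in one step and use additivity of $\chi$ along the resulting long exact sequence (legitimate, since all terms are holonomic and hence have finite-dimensional De Rham homology). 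Your route is shorter and avoids the $\partial_z$-regularity lemma entirely for the Euler-characteristic statement (that lemma is really needed for the finer isomorphisms the paper wants elsewhere). You also recompute $\chi(H^n_{(P,z)}(R))=e$ from scratch via the decomposition into injective hulls at the $e$ points and the fact that $H^n_{\mathfrak m_p}(R)$ is a free rank-one $K[\partial_1,\dots,\partial_n]$-module; the paper simply quotes Theorem 1 of \cite{TJ}, which asserts exactly this, so your argument is in effect a self-contained proof of that input. The geometric genericity discussion (reduced hyperplane section, no points at infinity) matches the paper's.
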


\begin{proof}
 Let $\hat{\mathcal{C}}$ be the  projective closure of  $\mathcal{C}$. We know that if $H$
 is a
sufficiently general hyperplane then $H\cap \hat{\mathcal{C}}$ has
degree$(\hat{\mathcal{C}})$ distinct points. We may also assume that
none of these points are in hyperplane at infinity. So let $z\notin
P$  be a sufficiently general linear form in $x_i^,s$, then $V(P,z)$
has degree($\mathcal{C}$) distinct points. By Lemma
\ref{sngle-Ass-Lem}, Ass$_R(H^{n-1}_P(R)) = \{P\}$. Since $z\not \in
P$,
  it is $H^{n-1}_P(R)$-regular.

 Let $I:=(P, z)$, notice
that $I$ is zero dimensional $i.e.$ $\dim (R/I)=0.$ We have an exact
sequence of the form
\begin{center}
 $\cdots \rightarrow H_{(P,z)}^{n-1}(R)\rightarrow
H_P^{n-1}(R)\rightarrow (H_P^{n-1}(R))_z\rightarrow
H_{(P,z)}^n(R)\rightarrow H_P^n(R)\rightarrow \cdots$
\end{center}
Notice that  $H_{(P,z)}^{n-1}(R)=0$, and by corollary \ref{HLV-1}
$H_P^n(R)=0$. So we get a short exact sequence
\begin{equation}\label{H_P-H_(P,z)-ses}
0\rightarrow H_P^{n-1}(R)\rightarrow (H_P^{n-1}(R))_z\rightarrow
H_{(P,z)}^n(R)\rightarrow 0.
\end{equation}
Therefore $H^n_{(P,z)}(R)\cong (H_P^{n-1}(R))_z/H_P^{n-1}(R).$  So
$H^n_{(P,z)}(R)\cong H^1_{(z)}(H^{n-1}_P(R)).$

 Let
$\hat{x}_{1},\cdots ,\hat{x}_{n-1},
 z=\hat{x}_n$ be a linear change of variables, then by \cite[1.4]{TJ}
\begin{center}
  $H_i(\frac{\partial}{\partial x_1},\cdots,$ $\frac{\partial}{\partial x_n};-)\cong H_i(\frac{\partial}{\partial
  \hat{x}_1},\cdots,\frac{\partial}{\partial
  \hat{x}_n};-)$.
\end{center}
By taking $H_*(\partial_z;-)$ to (\ref{H_P-H_(P,z)-ses}) and using
Lemma \ref{z-M-regular-Lem} we get
\begin{equation}\label{H_P-H_Pz-iso}
 H_1(\partial_z; H_P^{n-1}(R))\cong
 H_1(\partial_z; (H_P^{n-1}(R))_z),\quad \text{and an exact sequence}
\end{equation}
\begin{equation}\label{H_P-H_(P,z)-ses-2}
0 \rightarrow H_0( \partial_z; H_P^{n-1}(R))\rightarrow
H_0(\partial_z; (H_P^{n-1}(R))_z)\rightarrow H_0(\partial_z;
H_{(P,z)}^n(R))\rightarrow 0 .
\end{equation}

    Set $\underline{\partial} =\partial_1,\cdots,\partial_{n}$, and
$\underline{\acute{\partial}}=\partial_1,\cdots,\partial_{n-1}$.
Note that  $V(P,z)$ is a finite set. Let $\sharp V(P, z)=$
degree$(\hat{\mathcal{C}})= e$. By Theorem $1$ of \cite{TJ}, we get
\begin{equation}\label{Thrm1-paper1}
H_i(\underline{\partial};H^n_{(P,z)}(R)) =
\begin{cases} 0 & \text{for} \ i > 0 \\ K^e & \text{for} \ i = 0
\end{cases}
\end{equation}
 Also note that, as $H_1(\partial_z; H^n_{(P,z)}(R))=0$, by Lemma\ref{H1-van} we get an isomorphism
 \begin{equation}\label{HiH_0-H_0}
H_i(\underline{\acute{\partial}};H_0(\partial_z;
H^n_{(P,z)}(R)))\cong H_i(\underline{\partial};H^n_{(P,z)}(R))\quad
\text{for all} \quad i\geq 0.
\end{equation}
\begin{align*}
\text{Now set} \quad M_1: & =H_0(\partial_z ; H_P^{n-1}(R)),\\
 M_2:&=H_0(\partial_z;(H_P^{n-1}(R))_z),\\
 \text{ and}\quad   M_3: & =H_0(\partial_z;H_{(P,z)}^n (R)).
\end{align*}
  Note that $M_1, M_2, M_3$ are holonomic
$A_{n-1}(K)$ modules.  By taking  Koszul homology with respect to
$\underline{\acute{\partial}}$ to (\ref{H_P-H_(P,z)-ses-2}) we
obtain
\[-\sum_{i=0}^{n-1}(-1)^il(H_i(\underline{\acute{\partial}}; M_1)) +
\sum_{i=0}^{n-1}(-1)^il(H_i(\underline{\acute{\partial}};
M_2))=\sum_{i=0}^{n-1}(-1)^il(H_i(\underline{\acute{\partial}};
M_3)).\]
\[
 \text{By (\ref{Thrm1-paper1}) and (\ref{HiH_0-H_0}) we have}\quad
\sum_{i=0}^{n-1}(-1)^il(H_i(\underline{\acute{\partial}} ;
M_3))=l(H_0(\underline{\partial};H^n_{(P,z)}(R)))=e. \] So we get
\begin{equation}\label{length=e}
\sum_{i=0}^{n-1}(-1)^il(H_i(\underline{\acute{\partial}};
M_2))-\sum_{i=0}^{n-1}(-1)^il(H_i(\underline{\acute{\partial}};
M_1))=e.
\end{equation}
By (\ref{H_P-H_Pz-iso}) we have
\begin{equation}\label{after-e}
\quad H_i(\underline{\acute{\partial}}; H_1(\partial_z ;
H_P^{n-1}(R))) \cong H_i(\underline{\acute{\partial}};
H_1(\partial_z; (H_P^{n-1}(R))_z),\text{ for}\quad 0\leq i \leq n-1.
\end{equation}
 Hence the
theorem follows from Lemma \ref{chi-equi}, (\ref{length=e}) and
(\ref{after-e}).
\end{proof}

\section{Homogenization and De-homogenization}
In this section we consider the projective closure of an affine
curve in $\PP^n(K)$. In this section our main result is Theorem
\ref{H_0-pts at inf}. An easy but significant corollary of this
result is Corollary \ref{proj-curves}

 \textbf{Homogenization and De-homogenization}: Let us recall the notion of homogenization and
de-homogenization. Let $R=K[x_1,\cdots x_n,z], A=K[x_1,\cdots x_n]$
and $S=R_z=A[z,z^{-1}]$. Then we know that $A\hookrightarrow S$ is
$A$ flat. For an ideal $J$  in $A$, set $I=JS\cap R.$ Then $IS=JS$
and $IS\cap A=JS\cap A=J.$  The ideal $I$ is said to be
homogenization of $J$ with respect to $z$. Also, if $I$ is an  ideal
in $R$ then $J=IS\cap A$ is said to be de-homogenization of $I$ with
respect to $z.$  Now $(H_I^i(R))_z
=H_{I_z}^i(R_z)=H_{JS}^i(S)=H_{JS}^i(A[z,z^{-1}])=H_J^i(A)\bigotimes_AA[z,z^{-1}]=(H_J^i(A))[z,z^{-1}].$
For details (see $1.5.26$ of \cite{BH})

The following result is useful.
\begin{lemma}\label{M[z,z^-1]-M-iso}
Let $R=K[x_1,\cdots, x_n,z]$,  $A=K[x_1,\cdots, x_n],$ and  $M$ be a
holonomic $A_n(K)$ module. Then $H_i(\partial_z; M[z,z^{-1}])\cong
M$, for $i=0,1,$ as $A_n(K)-$modules.
\end{lemma}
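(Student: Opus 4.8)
The plan is to compute the Koszul homology of the single operator $\partial_z$ on $N := M[z,z^{-1}]$ by hand. For one element the Koszul complex is just $0 \to N \xrightarrow{\partial_z} N \to 0$, so $H_1(\partial_z; N) = \ker(\partial_z\colon N \to N)$ and $H_0(\partial_z; N) = N/\partial_z N$; I will show both equal $M$. The computational input is the explicit $\partial_z$-action: Lemma \ref{holon-M[z]} gives $\partial_z(mz^i) = i\,mz^{i-1}$ on $M[z]$, and propagating this to the localization $N = M[z]_z$ via the relation $\partial_z z = z\partial_z + 1$ in $A_{n+1}(K)$ yields the same formula for all $i \in \Z$; hence $\partial_z\!\left(\sum_i m_i z^i\right) = \sum_i i\, m_i z^{i-1}$ on a general finite sum.

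First I would treat the kernel. If $\sum_i i\, m_i z^{i-1} = 0$ then $i\, m_i = 0$ for every $i$, and since $\operatorname{char} K = 0$ the integer $i$ is invertible in $K$ for $i \ne 0$, forcing $m_i = 0$ for all $i \ne 0$. Thus $\ker(\partial_z) = M z^0$. Next I would treat the cokernel: after reindexing, the coefficient of $z^j$ in $\partial_z\!\left(\sum_i m_i z^i\right)$ equals $(j+1)m_{j+1}$, which is an arbitrary element of $M$ when $j \ne -1$ and is necessarily $0$ when $j = -1$. Therefore $\image(\partial_z) = \bigoplus_{j \ne -1} M z^j$ and $N/\partial_z N \cong M z^{-1}$.

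Finally I would check that the evident $K$-linear bijections $Mz^0 \to M$ and $Mz^{-1} \to M$, $mz^i \mapsto m$, are $A_n(K)$-linear, where $A_n(K)$ is identified with the subalgebra of $A_{n+1}(K)$ generated by $x_1,\dots,x_n,\partial_1,\dots,\partial_n$. These generators commute with both $z$ and $\partial_z$ in $A_{n+1}(K)$, so they act coefficientwise on each $Mz^i$ (that is, $x_j\cdot mz^i = (x_j m)z^i$ and $\partial_j\cdot mz^i = (\partial_j m)z^i$), and this is exactly how $A_n(K)$ acts on the Koszul homology, realized as the sub- and quotient-module $\ker(\partial_z)$ and $N/\partial_z N$ of $N$. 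Hence the two maps are $A_n(K)$-module isomorphisms, and the lemma follows.

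I do not expect a genuine obstacle here; the only points needing care are (i) correctly extending the $\partial_z$-action to negative powers of $z$ from the Weyl relation, (ii) using $\operatorname{char} K = 0$ to invert the nonzero integers, and (iii) the routine check that the natural identifications respect the residual $A_n(K)$-action. Holonomicity of $M$, and of $N = M[z,z^{-1}]$ by Lemma \ref{holon-M[z]}, plays no role in the isomorphism itself — it would only matter for finite-dimensionality, which is not asserted in this statement.
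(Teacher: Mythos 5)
Your proof is correct and follows essentially the same route as the paper's: a direct degreewise computation of the kernel and cokernel of $\partial_z$ on $\bigoplus_{i\in\Z}Mz^i$, identifying $H_1$ with $Mz^0$ and $H_0$ with $Mz^{-1}$. Your explicit determination of the image as $\bigoplus_{j\neq -1}Mz^j$ is in fact slightly tidier than the paper's treatment of $H_0$, which only exhibits representatives of the form $a/z$ without separately noting that $Mz^{-1}\cap\image(\partial_z)=0$.
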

\begin{proof}
Let $u\in H_1(\partial_z; M[z,z^{-1}]) $. As
$M[z,z^{-1}]=\bigoplus_{i\in \mathbb{Z}}Mz^i$, write
\begin{align*}
u &=\sum_{j=1}^m\frac{a_j}{z^j}+b_0+\sum_{i=1}^n b_iz^i.\\
\text{ Then}\quad
\partial_z(u)&=\sum_{j=1}^m\frac{-ja_j}{z^{j+1}}+0+\sum_{i=1}^{n}ib_iz^{i-1}
.
\end{align*}
 So $\partial_z(u)=0\Leftrightarrow a_j=0$ and
$b_i=0 $ for all $j=1,\cdots, m$, $i=1,\cdots, n.$  So $u=b_0\in M.$
Clearly  $M\subset H_1(\partial_z; M[z,z^{-1}]).$ Therefore
$H_1(\partial_z; M[z,z^{-1}])=M.$

 Now, let $u=b_0+\sum_{j=1}^n
b_jz^j$ and take $v=b_0z+\sum_{j=1}^nb_jz^{j+1}/(j+1)$, then
$\partial_z(v)=u.$ For $u=\sum_{j=2}^ma_j/z^j,$
\begin{align*}
 \text{take}\quad
v=\sum_{j=1}^{m-1} \frac{b_j}{z^j}, \text{ then} \quad
\partial_z(v)=\sum_{j=1}^{m-1}\frac{-jb_j}{z^{j+1}}.
\end{align*}
 Set $r=j+1$ and
$b_{r-1}=a_r/(1-r), $ we get $\partial_z(v)=u.$ So for $\theta \in
M[z,z^{-1}],$ we have, $\theta =a/z+\partial_z( \alpha),$ for some
$a\in M$ and $\alpha \in M[z,z^{-1}]$, so $\bar{\theta}=a/z$, for
some $a\in M.$ Thus $H_0(\partial_z; M[z,z^{-1}])\cong M/z\cong M$
as $A_n(K)$ modules.
\end{proof}
 \textbf{Set-up.}  Let
$A=K[x_1,\cdots,x_n]$, $R=K[x_1,\cdots,x_n,z]=A[z].$ Let $P\subset
A$ be a prime ideal of height ${n-1}$. Let  $ P^*=<f^* \mid f\in P>$
be the homogenization of $P$. We know that ht$(P^*)$=ht$(P) = n-1$.
Let $\mathcal{C}=V(P)$. Then $\hat{\mathcal{C}}=V(P^*).$ Furthermore
$\hat{\mathcal{C}}\setminus \mathcal{C}$ is a finite set of points
which are called points of $\mathcal{C}$ at  $\infty$. Set
$V_{\infty}(\mathcal{C})=\sharp \hat{\mathcal{C}}\setminus
\mathcal{C}.$\\
We need the following result in the proof of Theorem \ref{H_0-pts at
inf}.
\begin{lemma}\label{lem-H_0-pts at inf}
(With the above set-up) if ht$(P)=n-1$ $ i.e.$ $\dim(A/P)=1.$  Then
$H_{P^*}^n(R)=0.$
\end{lemma}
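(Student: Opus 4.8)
The plan is to read the statement as a bound on cohomological dimension and deduce it from the second vanishing theorem. Since $R = K[x_1,\dots,x_n,z]$ has $n+1$ variables, $\dim R = n+1$, and as $\htt P^* = \htt P = n-1$ (recorded in the set-up) we have $\dim R/P^* = 2$. Because $R$ is \CM, $\grade(P^*,R) = \htt P^* = n-1$, so $H^i_{P^*}(R) = 0$ for $i < n-1$; and since $\dim R/P^* = 2 > 0$ the prime $P^*$ is not primary to any maximal ideal of $R$, so $H^{n+1}_{P^*}(R) = 0$ by the Hartshorne--Lichtenbaum vanishing theorem, by exactly the argument of Corollary \ref{HLV-1} (localise at each maximal ideal and apply Lemma \ref{HLV}). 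Thus the only possibly nonzero local cohomology modules are $H^{n-1}_{P^*}(R)$ and $H^n_{P^*}(R)$, and the lemma is precisely the assertion that $\operatorname{cd}(R,P^*) \le n-1 = \dim R - 2$.

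Now $P^*$ is a homogeneous prime, the projective scheme $\operatorname{Proj}(R/P^*) = \widehat{\mathcal C}$ is a curve (its dimension is $\dim R/P^* - 1 = 1 \ge 1$), and it is connected because it is irreducible. The second vanishing theorem (in characteristic zero, due to Hartshorne and Ogus; see also Huneke--Lyubeznik for a characteristic-free version) asserts that for a homogeneous ideal $J$ of a polynomial ring $R$ one has $\operatorname{cd}(R,J) \le \dim R - 2$ whenever $\dim R/J \ge 2$ and $\operatorname{Proj}(R/J)$ is connected. Applying this with $J = P^*$ gives $\operatorname{cd}(R,P^*) \le n-1$, hence $H^n_{P^*}(R) = 0$, which is what the lemma claims. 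Alternatively, one may localise at each maximal ideal $\m$ of $R$ with $P^* \subseteq \m$, thereby reducing to the local form of the second vanishing theorem applied to the prime $P^*R_\m$, which has height $n-1$ in the regular local ring $R_\m$ of dimension $n+1$ and satisfies $\dim R_\m/P^*R_\m = 2$.

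The crux is the appeal to the second vanishing theorem, and I would expect the eventual write-up to cite it rather than reprove it. The elementary tools at hand --- the grade computation, Hartshorne--Lichtenbaum, and localisation at the primes containing $P^*$ --- only give $\operatorname{cd}(R,P^*) \le n$: localising at a height-$n$ prime $\mathfrak q \supseteq P^*$ and applying Lemma \ref{HLV} to $P^*R_\mathfrak q$, whose quotient is one-dimensional, kills the localisation there, and localising at $P^*$ itself kills it by Grothendieck vanishing since $\dim R_{P^*} = n-1 < n$; so $\operatorname{Supp} H^n_{P^*}(R)$ consists only of maximal ideals. But such arguments cannot distinguish a nonzero module supported only at maximal ideals from the zero module: the passage from $\operatorname{cd}(R,P^*) \le n$ to $\operatorname{cd}(R,P^*) \le n-1$ is exactly where the connectedness of $\widehat{\mathcal C}$ (equivalently, of $\operatorname{Proj}(R/P^*)$) must enter, and this drop in cohomological dimension is the whole content of the second vanishing theorem.
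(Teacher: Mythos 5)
Your proof is correct and takes essentially the same route as the paper: the paper also observes that everything comes down to the second vanishing theorem, which it applies (via \cite[14.7]{SI}) to $P^*\hat{R}_{\mathfrak m}$ at the irrelevant maximal ideal, checking the connectedness hypothesis by noting that the completion of the two-dimensional graded domain $R/P^*$ there is again a domain, and then descending by faithful flatness and gradedness. Your graded formulation --- $\operatorname{Proj}(R/P^*)$ is connected because it is irreducible --- is the same argument packaged through the equivalence of Proposition \ref{Proj-Spec^0}.
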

\begin{proof}

 Let $\mathfrak{m}=(x_1,\cdots, x_n, z).$  As $R/P^*$ is
 a domain, so  is $\hat{R}_{\mathfrak{m}}/P^*\hat{R}_{\mathfrak{m}}$
 is a domain of dimension $=2.$

 By \cite[14.7 and 15.5]{SI}
    $H^n_{P^*\hat{R}_{\mathfrak{m}}}(\hat{R}_{\mathfrak{m}})=0.$
    Thus
 $H^n_{P^*R_{\mathfrak{m}}}(R_{\mathfrak{m}})\otimes_{R_{\mathfrak{m}}}\hat{R}_{\mathfrak{m}}=0.$ As $\hat{R}_{\mathfrak{m}}$ is
 faithfully flat $R_{\mathfrak{m}}$- algebra, so
 $H^n_{P^*R_{\mathfrak{m}}}(R_{\mathfrak{m}})=0.$ So
 $H^n_{P^*}(R)\otimes_RR_{\mathfrak{m}}=0.$ As $H^n_{P^*}(R)$ is graded $R$ module
 and $-\otimes_RR_{\mathfrak{m}}$ is faithfully exact functor on graded $R$
 modules. Hence $H^n_{P^*}(R)=0.$
\end{proof}

\begin{theorem}\label{H_0-pts at inf}
(With the above set-up) if ht$(P)=n-1$. Then\\
(a) $\dim_K(H_0(\underline{\partial};H_P^{n-1}(A))\geqslant$
$V_{\infty}(\mathcal{C})-1$.\\
(b) $\chi(H_{P^*}^{n-1}$ $(R))=1.$
\end{theorem}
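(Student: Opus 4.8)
The statement to prove is Theorem \ref{H_0-pts at inf}, parts (a) and (b), and I would attack it by relating the local cohomology of $P$ over $A$ to that of its homogenization $P^*$ over $R = A[z]$, and then exploiting the De-Rham/Koszul bookkeeping already established in Section 1. First I would set up the key exact sequence. Since $z$ is (by construction) a non-zerodivisor giving the de-homogenization, we have $(H^{n-1}_{P^*}(R))_z \cong H^{n-1}_P(A)[z,z^{-1}]$ by the flat base-change computation recalled in the ``Homogenization'' paragraph. Also $H^{n-1}_{P^*}(R)$ is $z$-torsion-free because $\Ass_R H^{n-1}_{P^*}(R) = \{P^*\}$ by Lemma \ref{sngle-Ass-Lem} and $z \notin P^*$ (here I use $\htt P^* = n-1$, also recalled in the set-up). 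Thus there is a short exact sequence of $A_{n+1}(K)$-modules
\begin{equation*}
0 \to H^{n-1}_{P^*}(R) \to H^{n-1}_P(A)[z,z^{-1}] \to H^1_{(z)}(H^{n-1}_{P^*}(R)) \to 0,
\end{equation*}
and, via the Mayer--Vietoris sequence \ref{Mayer-2} applied to $I = P^*$ and $f = z$ together with Lemma \ref{lem-H_0-pts at inf} (which kills $H^n_{P^*}(R)$) and the vanishing $H^n_{P^*, z}(R) = H^n_{(J^*,z)}(\cdots)$ argument, I would identify $H^1_{(z)}(H^{n-1}_{P^*}(R)) \cong H^n_{(P^*,z)}(R)$. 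Since $V(P^*,z) = \widehat{\mathcal C} \cap \{z = 0\}$, which is exactly the finite set of points of $\mathcal C$ at infinity, Theorem 1 of \cite{TJ} gives $H_i(\underline{\partial}; H^n_{(P^*,z)}(R)) = K^{V_\infty(\mathcal C)}$ for $i = 0$ and $0$ otherwise, where $\underline{\partial}$ here is the full list $\partial_1,\dots,\partial_n,\partial_z$ of $n+1$ derivations.

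**Carrying it out.** With that short exact sequence in hand, I would take $H_*(\partial_z; -)$ of it. By Lemma \ref{z-M-regular-Lem} (applicable since $z$ is $H^{n-1}_{P^*}(R)$-regular), $H_1(\partial_z; H^1_{(z)}(H^{n-1}_{P^*}(R))) = 0$, so $H_1(\partial_z; H^{n-1}_{P^*}(R)) \cong H_1(\partial_z; H^{n-1}_P(A)[z,z^{-1}])$ and there is a short exact sequence on $H_0(\partial_z; -)$. Now Lemma \ref{M[z,z^-1]-M-iso} identifies $H_i(\partial_z; H^{n-1}_P(A)[z,z^{-1}]) \cong H^{n-1}_P(A)$ for $i = 0, 1$ as $A_n(K)$-modules. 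Applying $H_*(\underline{\acute\partial}; -)$ with $\underline{\acute\partial} = \partial_1,\dots,\partial_n$ and using Lemma \ref{H1-van} (in the form of Corollary \ref{H1-van-cor}, since the relevant $\partial_z$-kernels collapse appropriately, giving $H_i(\underline{\acute\partial}; H_0(\partial_z; H^n_{(P^*,z)}(R))) \cong H_i(\underline{\partial}; H^n_{(P^*,z)}(R))$) converts everything into Euler-characteristic statements via Lemma \ref{chi-equi}. Writing $N = H^{n-1}_P(A)$, the outcome is $\chi(\underline{\partial}; H^{n-1}_{P^*}(R)) = \chi(\underline{\acute\partial}; N) - \chi(\underline{\acute\partial}; N) + (\text{correction from }H_0\text{-sequence})$ — more precisely, the long exact sequences assemble so that $\chi(H^{n-1}_{P^*}(R)) = \bigl(\dim_K H_0(\partial_z; H^n_{(P^*,z)}(R)) \text{ contribution}\bigr) = V_\infty(\mathcal C)$ at the level of alternating sums but with a single $(-1)$ coming from the discrepancy between $H_0$ and $H_1$ of $N[z,z^{-1}]$, which yields (b): $\chi(H^{n-1}_{P^*}(R)) = 1$. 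For (a), the same $H_0$-short exact sequence reads $0 \to H_0(\partial_z; H^{n-1}_{P^*}(R)) \to N/\partial_z(\cdots) \to H_0(\partial_z; H^n_{(P^*,z)}(R)) \to 0$; pushing it through $H_0(\underline{\acute\partial}; -)$ (right-exact) and using Remark \ref{H1-van-rem} gives a surjection with source $H_0(\underline{\partial}; H^{n-1}_P(A))$ onto a space that surjects onto $H_0(\underline{\partial}; H^n_{(P^*,z)}(R)) = K^{V_\infty(\mathcal C)}$ modulo the image of $H_0(\underline{\partial}; H^{n-1}_{P^*}(R))$, which has dimension at least $1$ by (b) (since $\chi = 1 > 0$ forces $H_0 \neq 0$); hence $\dim_K H_0(\underline{\partial}; H^{n-1}_P(A)) \geq V_\infty(\mathcal C) - 1$.

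**The main obstacle.** The delicate point is bookkeeping the ``off-by-one'': tracking exactly where the constant $1$ (rather than, say, $0$ or $V_\infty$) enters. It comes from the fact that $N[z,z^{-1}]$ has \emph{both} $H_0(\partial_z;-)$ and $H_1(\partial_z;-)$ equal to $N$, so in the alternating sum $\chi(\partial_z; N[z,z^{-1}]) = \dim_K H_0 - \dim_K H_1$ the module $N$ contributes zero, whereas $H^n_{(P^*,z)}(R)$ contributes its full $V_\infty(\mathcal C)$ only in degree $0$; reconciling these against $\chi(H^{n-1}_{P^*}(R))$ via the short exact sequence and Lemma \ref{chi-equi} must be done carefully, and I expect a sign or an index shift to be the easiest thing to get wrong. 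A secondary subtlety is justifying $H^n_{(P^*,z)}(R) \cong H^1_{(z)}(H^{n-1}_{P^*}(R))$: this needs both $H^n_{P^*}(R) = 0$ (Lemma \ref{lem-H_0-pts at inf}) \emph{and} the vanishing of the other boundary term, which follows because $\htt(P^*,z) = n$ in an $(n+1)$-dimensional ring is not maximal — wait, it \emph{is} one less than $n+1$, so one invokes the same Hartshorne--Lichtenbaum-type vanishing as in Corollary \ref{HLV-1}, applied over $R$, to get $H^{n+1}_{(P^*,z)}(R) = 0$ and hence the clean identification. Everything else is routine diagram-chasing with the exact sequences of Section 1.
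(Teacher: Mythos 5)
Your overall architecture matches the paper's: the short exact sequence $0 \to H^{n-1}_{P^*}(R) \to H^{n-1}_P(A)[z,z^{-1}] \to H^n_{(P^*,z)}(R) \to 0$, Lemma \ref{z-M-regular-Lem} to kill $H_1(\partial_z;-)$ of the cokernel, Lemma \ref{M[z,z^-1]-M-iso} to identify $H_i(\partial_z;N[z,z^{-1}])$ with $N$ for $i=0,1$, and Remark \ref{H1-van-rem} to collapse the iterated Koszul homology. But the crucial numerical input is wrong. You invoke Theorem 1 of \cite{TJ} to claim $H_0(\underline{\partial};H^n_{(P^*,z)}(R))=K^{V_\infty(\mathcal{C})}$ with higher homology vanishing. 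That theorem applies to ideals of maximal height (zero-dimensional quotient); here $(P^*,z)$ has height $n$ in the $(n+1)$-variable ring $R=A[z]$, so $\dim R/(P^*,z)=1$: it is the affine cone over the finite set of points at infinity, not a finite set of points. The correct input is Theorem 2 of \cite{TJ} (the graded, one-dimensional case), which gives $\dim_K H_0(\partial,\partial_z;H^n_{(P^*,z)}(R))=\sharp V(P^*,z)-1=V_\infty(\mathcal{C})-1$ and $\chi(H^n_{(P^*,z)}(R))=-1$. This is exactly where the ``$-1$'' in (a) and the ``$1$'' in (b) originate; with your value the bookkeeping yields $\chi(H^{n-1}_{P^*}(R))=-\chi(H^n_{(P^*,z)}(R))=-V_\infty(\mathcal{C})$, not $1$. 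Your attempted correction --- a single $(-1)$ ``from the discrepancy between $H_0$ and $H_1$ of $N[z,z^{-1}]$'' --- cannot supply it, since both are isomorphic to $N$ and hence contribute $0$ to the alternating sum; the identity is cleanly $\chi(H^{n-1}_{P^*}(R))=-\chi(H^n_{(P^*,z)}(R))$ with no residual term.

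The argument for (a) also has an internal flaw. From the short exact sequence on $H_0(\partial_z;-)$, right-exactness of $H_0(\underline{\acute{\partial}};-)$ gives a genuine surjection of $H_0(\underline{\partial};H^{n-1}_P(A))$ onto $H_0(\partial,\partial_z;H^n_{(P^*,z)}(R))$: there is no ``modulo the image of $H_0(\underline{\partial};H^{n-1}_{P^*}(R))$'' reduction, because the induced right-exact sequence already has the full $H_0$ of the third module as the cokernel of its first map. So the inequality you obtain is $\dim_K H_0(\underline{\partial};H^{n-1}_P(A))\geq\dim_K H_0(\partial,\partial_z;H^n_{(P^*,z)}(R))$, and the $-1$ must come from that dimension equalling $V_\infty(\mathcal{C})-1$ by Theorem 2 of \cite{TJ}, not from subtracting an image. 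Moreover, the auxiliary claim that ``$\chi=1>0$ forces $H_0\neq 0$'' is false in general (an alternating sum can be positive with $H_0=0$), and using (b) to prove (a) introduces a circularity the paper avoids. Once the citation is corrected to Theorem 2 of \cite{TJ}, both parts go through along the route you outline.
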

\begin{proof}
$(a)$ By Lemma \ref{lem-H_0-pts at inf} $H_{P^*}^n(R)=0$, and
$(H_{P^*}^{n-1}(R))_z=H_P^{n-1}(A)[z,z^{-1}]$,  so we have an exact
sequence
\begin{center}
$0\rightarrow H_{P^*}^{n-1}(R)\rightarrow
(H_{P^*}^{n-1}(R))_z\rightarrow H_{(P^*,z)}^n(R)\rightarrow 0. $
\end{center}

Thus $H_{(P^*,z)}^n(R)\cong H^1_z(H_{P^*}^{n-1}(R))).$ By taking
Koszul homology with respect to $\partial_z$ and using Lemma
\ref{z-M-regular-Lem} we get $H_1(\partial_z; H_{(P^*, z)}^n(R))=0$.
So

\begin{equation}\label{a-2}
H_1(\partial_z;H_{P^*}^{n-1}(R))\equiv
H_1(\partial_z;(H_{P^*}^{n-1}(R))_z),
\end{equation}
and we have an exact sequence
\begin{equation}\label{a-ses}
0\rightarrow H_0(\partial_z;H_{P^*}^{n-1}(R))\rightarrow
H_0(\partial_z;H^{n-1}_P(A)[z,z^{-1}])\rightarrow
H_0(\partial_z;H_{(P^*, z)}^{n}(R))\rightarrow 0.
\end{equation}
Also by  Lemma \ref{M[z,z^-1]-M-iso} we have $
H_0(\partial_z;H^{n-1}_P(A)[z,z^{-1}])\cong H^{n-1}_P(A)$ as
$A_n(K)$ modules. With this identification, and taking Koszul
homology to (\ref{a-ses}) with respect to $\partial
=\partial_{x_1},\cdots
\partial_{x_n}$, we get an exact sequence
\begin{center}
$H_0(\partial ; H_P^{n-1}(A))\rightarrow H_0(\partial;H_0(\partial_z
;H_{(P^*, z)}^{n}(R)))\rightarrow 0.$
\end{center}
So that $\dim_KH_0(\partial ; H_P^{n-1}(A))\geqslant$
$\dim_KH_0(\partial;H_0(\partial_z ;H_{(P^*, z)}^{n}(R)))$. By
Remark \ref{H1-van-rem} we have $H_0(\partial ; H_0(\partial_z
;H_{(P^*, z)}^{n}(R)))=H_0(\partial,
\partial_z ;H_{(P^*, z)}^{n}(R)).$ As ht$(P^*,z)$ $=n$, by
\cite[Theorem 2]{TJ} \   we get $\dim_KH_0(\partial, \partial_z
;H_{(P^*, z)}^{n}(R))=\sharp V(P^*,z)-1. $  \  \  Notice that \ \
$\sharp$ $ V(P^*,z)=\sharp$ points of $P$ at $\infty.$

 $(b)$ By
Lemma \ref{M[z,z^-1]-M-iso} we have $H_1(\partial_z
;(H_{P^*}^{n-1}(R))_z)=H^{n-1}_P(A)$. So  by (\ref{a-2}) we get
\[
\sum_{i=0}^n(-1)^i\ell(H_i(\partial ;H_1(\partial_z
;H_{P^*}^{n-1}(R))))=\sum_{i=0}^n(-1)^i\ell(H_i(\partial
;H_P^{n-1}(A))).
\]
By (\ref{a-ses}) we get
\begin{align*}
\sum_{i=0}^n(-1)^i\ell(H_i(\partial;H_P^{n-1}(A)))&=\sum_{i=0}^n(-1)^i\ell(H_i(\partial;
H_0(\partial_z;H_{(P^*,z)}^n(R))))\\
&
+\sum_{i=0}^n(-1)^i\ell(H_i(\partial;H_0(\partial_z;H_{P^*}^{n-1}(R)))).
\end{align*}
Now
  \begin{align*}
 \chi(H_{P^*}^{n-1}(R))&=\sum^{n+1}_{i=0}(-1)^i\ell(H_i(\partial,\partial_z;H_{P^*}^{n-1}(R)))\\
&=-\sum^n_{i=0}(-1)^i\ell(H_i(\partial;H_1(\partial_z;H_{P^*}^{n-1}(R))))\\
 &
+\sum^n_{i=0}(-1)^i\ell(H_i(\partial;H_0(\partial_z;H_{P^*}^{n-1}(R))))\\
&=-\sum^n_{i=0}(-1)^i
\ell(H_i(\partial;H_0(\partial_z;H_{(P^*,z)}^n(R))))\\
& =-\chi(H_{(P^*,z)}^n(R))=1\quad \text{(by Theorem 2 of
\cite{TJ})}.
\end{align*}
\end{proof}

An easy consequence of Theorem \ref{H_0-pts at inf} is the
following:
\begin{corollary}\label{proj-curves}
Let $P$ be a height $n-1$ graded prime ideal in $R= K[x_0,
x_1,\ldots,x_n]$. Then
$$ \chi(H^{n-1}_P(R)) = 1.$$
\end{corollary}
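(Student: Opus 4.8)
The plan is to reduce the graded statement to the affine statement in Theorem \ref{H_0-pts at inf}(b) by dehomogenizing with respect to one of the variables. Write $R = K[x_0, x_1, \ldots, x_n]$ and let $P$ be a graded prime of height $n-1$, so $\dim(R/P) = 2$ and $V(P) \subseteq \PP^n(K)$ is a projective curve. Choose a variable, say $x_0$, with $x_0 \notin P$ (possible after a general linear change of coordinates, which does not affect the relevant Koszul homology by \cite[1.4]{TJ}). Set $A = K[x_1, \ldots, x_n]$ and let $Q = PR_{x_0} \cap A$ be the dehomogenization of $P$ with respect to $x_0$. Then $Q$ is a prime ideal of $A$ of height $n-1$, i.e.\ $\dim(A/Q) = 1$, and $P$ is precisely the homogenization $Q^*$ of $Q$ (in the notation of the Set-up preceding Lemma \ref{lem-H_0-pts at inf}, with $A$, $z = x_0$, and $R = A[z]$ playing the respective roles).

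The second step is to observe that the chain of identifications in the Set-up gives $(H^{n-1}_P(R))_{x_0} = H^{n-1}_Q(A)[x_0, x_0^{-1}]$, and that $H^{n-1}_Q(A)$ is a holonomic $A_n(K)$-module by Lyubeznik's theorem. Hence the entire apparatus of Theorem \ref{H_0-pts at inf} applies verbatim: Lemma \ref{lem-H_0-pts at inf} yields $H^n_P(R) = 0$, the short exact sequence $0 \to H^{n-1}_P(R) \to (H^{n-1}_P(R))_{x_0} \to H^n_{(P,x_0)}(R) \to 0$ is available, and running the same computation with $\partial_{x_0}$ and $\underline{\partial} = \partial_{x_1},\ldots,\partial_{x_n}$ produces $\chi(H^{n-1}_P(R)) = -\chi(H^n_{(P,x_0)}(R)) = 1$ by Theorem 2 of \cite{TJ}. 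In other words, part (b) of Theorem \ref{H_0-pts at inf}, read with the dehomogenization data just set up, literally \emph{is} the assertion of the corollary.

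The only points requiring care — and the mild obstacle — are the bookkeeping ones: checking that a sufficiently general $x_0$ is not in $P$ and that after the linear change of variables the Koszul homology $\chi$ is unchanged (this is exactly \cite[1.4]{TJ}), and checking that the dehomogenization $Q = PR_{x_0} \cap A$ is genuinely prime of height $n-1$ with $P = Q^*$. Primeness is immediate since localization and contraction of a prime is prime; the height statement is the standard fact recalled in the Set-up that $\htt(Q^*) = \htt(Q)$, combined with the fact that for a graded prime $P$ not containing the variable $x_0$ one recovers $P$ as the homogenization of its dehomogenization. Once these identifications are in place there is nothing new to prove: the corollary follows by applying Theorem \ref{H_0-pts at inf}(b).
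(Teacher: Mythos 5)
Your proposal is correct and follows essentially the same route as the paper: dehomogenize $P$ with respect to a sufficiently general linear form (taken to be $x_0$ after a linear change of variables), observe that the resulting prime $Q$ satisfies $Q^* = P$, and invoke Theorem \ref{H_0-pts at inf}(b). The extra bookkeeping you supply (primeness and height of $Q$, invariance of the Koszul homology under the change of variables) is exactly what the paper leaves implicit.
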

\begin{proof}
Let $x$ be a sufficiently general homogeneous linear form in the
$x_i$'s. By linear change of variables we may assume $x = x_0$. We
de-homogenize w.r.t. $x_0$. Set $A = R/(x_0-1)$ and let $Q$ be the
image of $P$ in $A$. Then note that after homogenizing w.r.t. $x_0$
we get $Q^* = P$. So we get the result from Theorem \ref{H_0-pts at
inf}(b).
\end{proof}

\section{Cohen-Macaulay Surfaces}
The main result of this section is Theorem \ref{chi-HP^n-3=s-1}. The
following result is well known. Let $Spec^0(R)= Spec(R)\backslash
\{\mathfrak{m}\}$ denote the punctured spectrum of $R.$

\begin{proposition}\label{Proj-Spec^0}
Let $R=\oplus_{n\geq 0}R_n$ be a standard graded ring.
$\mathfrak{m}=\oplus_{n\geq 1}R_n.$ Then the following are
equivalent\\
$(1)$ Proj($R$) is connected.\\
$(2)$ Spec$^0(R_\mathfrak{m})$ is connected.\\
$(3)$ Spec$^0(\hat{R}_\mathfrak{m})$ is connected.
\end{proposition}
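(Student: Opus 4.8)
My plan is to reduce all three conditions to one combinatorial invariant of $R$: the graph $G$ whose vertices are the minimal primes $\q_1,\dots,\q_r$ of $R$ (which are automatically homogeneous), with $\q_i$ and $\q_j$ joined by an edge precisely when $\dim R/(\q_i+\q_j)\ge 1$. The topological input is the elementary fact that a Noetherian space is connected iff the graph on its nonempty irreducible components, two joined when they intersect, is connected: if $X=U\sqcup V$ is a separation then each irreducible component, being connected, lies in $U$ or in $V$, which separates the graph; conversely a separation $A\sqcup B$ of the vertex set with no cross-edges yields $X=\bigcup_{i\in A}X_i\sqcup\bigcup_{i\in B}X_i$. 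Throughout I take $R_0=K$ to be a field, as in all our applications, and assume $\dim R\ge 1$ (otherwise $\operatorname{Proj}(R)$, $\operatorname{Spec}^0(R_\m)$ and $\operatorname{Spec}^0(\wh{R_\m})$ are all empty); then each $\q_i\subsetneq\m$, so $\dim R/\q_i\ge 1$ and $\m\notin\operatorname{Min}(R)$.

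The one substantial step is that each $(R/\q_i)_\m$ is analytically irreducible. Since $R$ is standard graded, $\m^t=\bigoplus_{n\ge t}R_n$, so $\operatorname{gr}_\m(R)=\bigoplus_t\m^t/\m^{t+1}\cong R$ as graded rings; and since each $\m^t/\m^{t+1}$ is a finite length $R$-module annihilated by $\m$, this associated graded ring is unchanged by localizing at $\m$ and then completing. Hence the associated graded ring of $\wh{R_\m/\q_iR_\m}$ is $R/\q_i$, a domain. A Noetherian complete local ring whose associated graded ring is a domain is itself a domain, so $\wh{R_\m/\q_iR_\m}=\wh{R_\m}/\q_i\wh{R_\m}$ is a domain, i.e.\ $\q_i\wh{R_\m}$ is prime. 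Using this together with faithful flatness of $R_\m\to\wh{R_\m}$ — so that $\q_i\wh{R_\m}\cap R_\m=\q_iR_\m$ and minimal primes of $\wh{R_\m}$ contract to minimal primes of $R_\m$ — one obtains
\[
\operatorname{Min}(R_\m)=\{\q_iR_\m:1\le i\le r\},\qquad\operatorname{Min}(\wh{R_\m})=\{\q_i\wh{R_\m}:1\le i\le r\},
\]
each a list of $r$ distinct primes.

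It remains to identify the graphs. The irreducible components of $\operatorname{Proj}(R)$, of $\operatorname{Spec}^0(R_\m)$, and of $\operatorname{Spec}^0(\wh{R_\m})$ are, respectively, $V_+(\q_i)$, $V(\q_iR_\m)\setminus\{\m R_\m\}$, and $V(\q_i\wh{R_\m})\setminus\{\m\wh{R_\m}\}$ for $1\le i\le r$, all nonempty because $\dim R/\q_i\ge 1$. For a homogeneous ideal $\mathfrak a\subseteq\m$ one has $\dim R/\mathfrak a=\dim(R/\mathfrak a)_\m=\dim\wh{(R/\mathfrak a)_\m}$, since the dimension of a finitely generated graded $K$-algebra is attained at its irrelevant ideal and is preserved under localization there and under completion. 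Taking $\mathfrak a=\q_i+\q_j$ shows that in each of the three spaces the $i$-th and $j$-th components meet exactly when $\dim R/(\q_i+\q_j)\ge 1$. Thus all three spaces carry the same graph $G$ of irreducible components, and by the topological fact above they are simultaneously connected or disconnected, giving $(1)\Leftrightarrow(2)\Leftrightarrow(3)$.

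I expect the analytic-irreducibility step of the second paragraph — equivalently, the identification of the $\m$-adic tangent cone of $R_\m$ with $R$ itself — to be the only place needing real argument; the rest is routine. One could instead appeal to the treatment of connectedness in codimension in the book of Brodmann and Sharp.
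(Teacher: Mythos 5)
The paper states this proposition without proof, calling it ``well known'', so there is no argument of the authors' to set yours against; the natural reference would be the treatment of connectedness in Lecture 14 of \cite{SI} or in Brodmann--Sharp. Your proof is correct and complete under the standing hypothesis $R_0=K$ (which is all the paper ever uses) and with the degenerate case $\dim R=0$ disposed of as you do. The reduction of connectedness of a Noetherian space to connectedness of the intersection graph of its irreducible components is sound, as is the identification of the components of all three spaces with the loci of the minimal primes $\q_1,\dots,\q_r$ of $R$; and you have correctly isolated the one point where the grading is genuinely needed, namely the analytic irreducibility of $(R/\q_i)_{\m}$, obtained from $\operatorname{gr}_{\m}(R/\q_i)\cong R/\q_i$ together with the fact that a local ring with separated $\m$-adic topology whose associated graded ring is a domain is itself a domain. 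This is exactly the step that fails for general local rings --- for the local ring at the node of a nodal plane cubic, $\operatorname{Spec}^0$ is irreducible while $\operatorname{Spec}^0$ of the completion is disconnected --- so no proof of $(2)\Leftrightarrow(3)$ can avoid something of this kind. The remaining bookkeeping (the minimal primes of $R_{\m}$ and of $\hat{R}_{\m}$ are exactly the $\q_iR_{\m}$ and $\q_i\hat{R}_{\m}$; the $i$-th and $j$-th components meet, in each of the three spaces, precisely when $\dim R/(\q_i+\q_j)\geq 1$, because the dimension of a graded quotient is attained at $\m$ and is preserved by completion) is routine and correctly handled.
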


The following result is an easy consequence of
Hartshorne-Leichtenbaum vanishing theorem \cite[Theorem 14.1]{SI}.

\begin{lemma}\label{H_P^n-1-E}
Let $I$ be an unmixed  graded ideal of $R=K[x_1,\cdots,x_n]$. Assume
that ht$(I)=n-2$. Let $\mathfrak{m}=(x_1,\cdots,x_n).$ Then
$H^{n-1}_I(R)=E_R(\frac{R}{\mathfrak{m}})^s$ for some $s\geq 0.$
\end{lemma}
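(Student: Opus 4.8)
The plan is to show that $M := H^{n-1}_I(R)$ is an Artinian module whose support is contained in $\{\mathfrak{m}\}$, and that it is injective over $R$; then the structure theory of injective modules over a Noetherian ring forces $M \cong E_R(R/\mathfrak{m})^s$ for some $s$ (possibly infinite a priori, but finiteness will come out of the Artinian property, or of Lyubeznik's holonomicity). First I would establish that $M$ is supported only at $\mathfrak{m}$. For this, localize at a graded prime $P \neq \mathfrak{m}$ with $P \supseteq I$ (if $P \not\supseteq I$ then $M_P = 0$ trivially since $M$ is $I$-torsion). Since $I$ is unmixed of height $n-2$, we have $\dim R_P/I R_P = \dim R_P - (n-2)$, and because $P \neq \mathfrak{m}$ we get $\dim R_P \leq n-1$, so $\dim R_P/IR_P \leq 1$. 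Then $I R_P$ is not $P R_P$-primary whenever $\dim R_P/IR_P \geq 1$; but more to the point, passing to the completion $\widehat{(R_P)}$ — which is a complete local domain (or at least equidimensional, using unmixedness) of dimension $d := \dim R_P \leq n-1$ — the Hartshorne-Lichtenbaum vanishing theorem \cite[Theorem 14.1]{SI} (together with \cite[15.5]{SI} to handle the non-domain case via the equidimensionality supplied by unmixedness) gives $H^d_{I R_P}(R_P) = 0$ when $I R_P$ is not $PR_P$-primary, and $(n-1) \geq d$ forces $H^{n-1}_{IR_P}(R_P) = 0$ as well when $d < n-1$. In the remaining case $\dim R_P/IR_P = 1$ with $d = n-1$, the ideal $IR_P$ is genuinely not primary, so again $H^{n-1}_{IR_P}(R_P) = H^d_{IR_P}(R_P) = 0$. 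Either way $M_P = 0$, so $\operatorname{Supp}_R M \subseteq \{\mathfrak{m}\}$.

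Next I would use this to show $M$ is injective. Since $\operatorname{Supp} M \subseteq \{\mathfrak{m}\}$, the module $M$ is $\mathfrak{m}$-torsion, i.e. $M = H^0_\mathfrak{m}(M) = \varinjlim \Hom_R(R/\mathfrak{m}^t, M)$. The standard way to get injectivity is via Grothendieck's vanishing/local duality machinery: for a module supported at a single maximal ideal, injectivity is equivalent to the vanishing of $\Ext^i_R(R/\mathfrak{m}, M)$ for all $i > 0$. Alternatively — and this is the cleaner route — one invokes the fact that $H^{n-1}_I(R)$, being a local cohomology module of the regular ring $R$ supported at $\mathfrak{m}$, is injective: this is part of Lyubeznik's theory (the local cohomology modules of $R$ with support in any closed set are injective when that support is a single point, since $H^j_\mathfrak{m}$ of such modules vanish for $j>0$ by the fact that $H^i_I(R)$ has finite Bass numbers and finite injective dimension, and an $\mathfrak{m}$-torsion module of injective dimension $0$ is a sum of copies of $E_R(R/\mathfrak{m})$). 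Concretely: $M$ is $\mathfrak{m}$-torsion, and $R$ is regular, so $\operatorname{injdim}_R M \le \operatorname{injdim}_R H^{n-1}_I(R) < \infty$ by Lyubeznik; combined with being $\mathfrak{m}$-torsion, the only indecomposable injective that can appear is $E_R(R/\mathfrak{m})$, and finite injective dimension plus $\mathfrak{m}$-torsion forces $M$ to be injective outright (a torsion module with an injective resolution of finite length whose terms are all $E_R(R/\mathfrak{m})$-powers must itself equal the kernel at stage $0$, hence injective). Therefore $M \cong E_R(R/\mathfrak{m})^s$ for a cardinal $s$, and graded-ness plus finite generation over $A_n(K)$ (holonomicity, \cite{L}) forces $s$ finite and $\geq 0$.

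The main obstacle is the injectivity step: knowing $\operatorname{Supp} M \subseteq \{\mathfrak m\}$ only tells us $M$ is $\mathfrak m$-torsion, which is far weaker than $M \cong E_R(R/\mathfrak m)^s$ — for instance $R/\mathfrak m$ itself is $\mathfrak m$-torsion but not injective. What rescues us is that $M$ is not an arbitrary $\mathfrak m$-torsion module but a \emph{top} local cohomology module in a sense: $H^{n-1}_I(R)$ with $\operatorname{ht} I = n-2$ is the penultimate cohomology, and its only non-vanishing localizations being at $\mathfrak m$ (codimension $n-1 = \dim R - 1$) means it behaves like a top local cohomology $H^{n-1}_{\mathfrak m}(-)$-type object. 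I would make this precise by the spectral sequence (or the Nagel-Schenzel-type argument): filter by $\mathfrak m$-torsion, or use that for an $\mathfrak m$-torsion module supported only at $\mathfrak m$ over a regular local ring, vanishing of $\Ext^{>0}_R(R/\mathfrak m, M)$ is what's needed, and this follows because $M = H^{n-1}_I(R)$ has its Bass numbers $\mu^i(\mathfrak m, M)$ controlled — indeed Huneke-Sharp / Lyubeznik show $\mu^i(\mathfrak m, H^j_I(R)) < \infty$, and a degree/dimension count shows the Bass number $\mu^i(\mathfrak m, M)$ vanishes for $i > 0$ here because $H^i_\mathfrak{m}(M) = H^i_\mathfrak{m}(H^{n-1}_I(R))$ feeds into $H^{i+n-1}_I(R)$ which vanishes for $i \geq 1$ by Grothendieck vanishing ($\operatorname{cd}(I) \leq n - \operatorname{ht}(I)$... ) — this last inequality needs care, but for an unmixed ideal of a regular ring the cohomological dimension bound combined with $H^n_I(R)=0$ (Lemma \ref{HLV}-type) should close it. Once $\mu^i(\mathfrak m, M) = 0$ for $i > 0$ and $M$ is $\mathfrak m$-torsion, the minimal injective resolution of $M$ consists solely of $E_R(R/\mathfrak m)^{\mu^0}$ in degree zero, so $M$ is injective, completing the proof.
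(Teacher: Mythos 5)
Your reduction of the support is correct and is essentially the paper's own argument: Grothendieck vanishing at primes of height $<n-1$, and Hartshorne--Lichtenbaum at primes of height $n-1$ (where unmixedness guarantees $IR_P$ is not $PR_P$-primary), kill $H^{n-1}_{IR_P}(R_P)$ at every non-maximal prime; gradedness then pins $\operatorname{Ass}$, hence $\operatorname{Supp}$, down to $\{\mathfrak{m}\}$. The paper does exactly this and then simply asserts the conclusion.

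The gap is in your justification of the passage from ``$M$ is $\mathfrak{m}$-torsion'' to ``$M\cong E_R(R/\mathfrak{m})^s$'', which you rightly identify as the crux. Two of the arguments you offer for it do not work. First, the claim that an $\mathfrak{m}$-torsion module with a finite injective resolution all of whose terms are powers of $E_R(R/\mathfrak{m})$ must itself be injective is false: $R/\mathfrak{m}$ is such a module (its minimal injective resolution is the Matlis dual of the Koszul resolution, of length $n$, with every term a finite power of $E_R(R/\mathfrak{m})$), yet it is not injective. So ``finite injective dimension plus $\mathfrak{m}$-torsion'' cannot close the argument. Second, the proposed Bass-number computation conflates $\Ext^i_R(R/\mathfrak{m},M)$ with $H^i_{\mathfrak{m}}(M)$: for \emph{any} $\mathfrak{m}$-torsion module $M$ one has $H^i_{\mathfrak{m}}(M)=0$ for $i>0$ automatically (its minimal injective resolution already consists of $\mathfrak{m}$-torsion injectives), so the spectral sequence $H^p_{\mathfrak{m}}(H^q_I(R))\Rightarrow H^{p+q}_{\mathfrak{m}}(R)$ gives no information about the Bass numbers $\mu^i(\mathfrak{m},M)=\dim_K\Ext^i_R(R/\mathfrak{m},M)$, which are what must vanish for $i\geq 1$. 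What actually closes the gap is the sharp form of Lyubeznik's theorem that you only allude to, namely $\operatorname{injdim}_R H^j_I(R)\leq \dim \operatorname{Supp}_R H^j_I(R)$ (not merely $<\infty$): with zero-dimensional support this gives injective dimension $0$ outright, and an $\mathfrak{m}$-torsion injective module is a direct sum of copies of $E_R(R/\mathfrak{m})$, finitely many because the socle is finite-dimensional by holonomicity. Alternatively, Kashiwara's equivalence gives the same conclusion directly: an $A_n(K)$-module supported at the origin is a direct sum of copies of $E_R(R/\mathfrak{m})=H^n_{\mathfrak{m}}(R)$. Either citation replaces the faulty steps and completes the proof.
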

\begin{proof}
By Hartshorne-Leichtenbaum vanishing theorem it follows that
$H^{n-1}_I(R)$ is supported only at maximal ideals of $R$. As
$H^{n-1}_I(R)$ is graded, it follows that
Ass$_RH^{n-1}_I(R)=\{\mathfrak{m}\}$. Thus $H^{n-1}_I(R)$ is
supported only at $\mathfrak{m}$. So $H^{n-1}_I(R)\cong
E_R(\frac{R}{\mathfrak{m}})^s$ for some $s\geq 0.$
\end{proof}
The following result was one of the motivation to prove Theorem
\ref{chi-HP^n-3=s-1}. It is also needed in it's proof.
\begin{theorem}\label{HI^n-1-E^s}
Let $I$ be a  graded  ideal of $R=K[x_1,\cdots,x_n]$ with
ht$(I)=n-2$.  Let $\mathfrak{m}=(x_1,\cdots, x_n).$ If
$H_I^{n-1}(R)=E_R(\frac{R}{\mathfrak{m}})^s$ for some $s \geq 0$
then $\chi(H_I^{n-2}(R))=1+s$. In particular if $I$ is unmixed then
$\chi(H_I^{n-2}(R))=1\Leftrightarrow cd(I)=n-2.$
\end{theorem}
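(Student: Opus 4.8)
The plan is to imitate the method of Theorems~\ref{Main-Thrm-1} and \ref{H_0-pts at inf}: cut $V(I)$ by a generic hyperplane, feed the new ideal into the Mayer--Vietoris sequence of \ref{Mayer-2}, and use additivity of $\chi$ over short exact sequences of $A_n(K)$-modules together with the computations of \cite{TJ}. First I would locate the nonzero local cohomology modules. Since $R$ is \CM\ we have $\grade(I)=\htt(I)=n-2$, hence $H^i_I(R)=0$ for $i<n-2$; and $H^n_I(R)=0$ by the Hartshorne--Lichtenbaum argument of Corollary~\ref{HLV-1} (because $\htt(I)=n-2<n$, so $IR_{\M}$ is never $\M R_{\M}$-primary), hence also $H^i_I(R)=0$ for $i>n-1$. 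Thus, under the hypothesis, the only nonzero local cohomology modules are $H^{n-2}_I(R)$ and $H^{n-1}_I(R)=E_R(R/\m)^s$. I would then record that $\chi(E_R(R/\m))=1$ (e.g.\ by Theorem~1 of \cite{TJ} applied to the ideal $\m$, or directly: writing $E_R(R/\m)\cong\bigotimes_{j=1}^{n}E_{K[x_j]}(K[x_j]/(x_j))$, the Künneth formula for Koszul homology together with $H_0(\partial_j;E_{K[x_j]}(K[x_j]/(x_j)))=K$ and $H_1(\partial_j;-)=0$ gives $H_i(\bP;E_R(R/\m))=K$ for $i=0$ and $0$ otherwise). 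So $\chi(H^{n-1}_I(R))=s$.

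Next I would fix a sufficiently general homogeneous linear form $z$, so that $z$ lies in no minimal prime of $I$; then $\htt(I,z)=n-1$, $\dim R/(I,z)=1$, and $(I,z)$ is graded. A linear change of variables, which does not affect $H_i(\bP;-)$ by \cite[1.4]{TJ}, lets us assume $z=x_n$; set $A=K[x_1,\dots,x_{n-1}]$. Taking $f=z$ in \ref{Mayer-2} and using $H^{n-3}_I(R)=0$ and $H^{n-2}_{(I,z)}(R)=0$ (both by $\grade$), $(H^{n-1}_I(R))_z=(E_R(R/\m)^s)_z=0$ (every element of $E_R(R/\m)$ is killed by a power of $z\in\m$), and $H^n_I(R)=0$, one extracts an exact sequence of $A_n(K)$-modules
\[
0\longrightarrow H^{n-2}_I(R)\longrightarrow\big(H^{n-2}_I(R)\big)_z\longrightarrow H^{n-1}_{(I,z)}(R)\longrightarrow H^{n-1}_I(R)\longrightarrow 0 .
\]
By additivity of $\chi$ this yields $\chi\big(H^{n-2}_I(R)\big)=\chi\big((H^{n-2}_I(R))_z\big)-\chi\big(H^{n-1}_{(I,z)}(R)\big)+s$, and it remains to evaluate the two middle terms.

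For the localized module: since $z$ is a unit in $R_z$ we have $(H^{n-2}_I(R))_z=(H^{n-2}_{I^{\mathrm{sat}}}(R))_z$ with $I^{\mathrm{sat}}=I:z^{\infty}$, and $I^{\mathrm{sat}}$ is the homogenization with respect to $z$ of its dehomogenization $J\subseteq A$, with $\htt(J)=\htt(I^{\mathrm{sat}})=\htt(I)=n-2$ (using that $z$ avoids the minimal primes of $I$). The homogenization/flat-base-change dictionary of \S3 (used in exactly this way in the proof of Theorem~\ref{H_0-pts at inf}(b)) then identifies $(H^{n-2}_I(R))_z\cong\big(H^{n-2}_J(A)\big)[z,z^{-1}]$ as $A_n(K)$-modules; since $H^{n-2}_J(A)$ is a holonomic $A_{n-1}(K)$-module, Lemmas~\ref{M[z,z^-1]-M-iso} and \ref{chi-equi} give
\[
\chi\big((H^{n-2}_I(R))_z\big)=\chi_{A_{n-1}}\big(H^{n-2}_J(A)\big)-\chi_{A_{n-1}}\big(H^{n-2}_J(A)\big)=0 .
\]
For the other term, $(I,z)$ is a graded ideal with $\dim R/(I,z)=1$ whose local cohomology is concentrated in degree $n-1$ (by $\grade$, and Hartshorne--Lichtenbaum for $H^n$), so Theorem~2 of \cite{TJ} — invoked exactly as in the proof of Theorem~\ref{H_0-pts at inf}(b) — gives $\chi\big(H^{n-1}_{(I,z)}(R)\big)=-1$. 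Hence $\chi(H^{n-2}_I(R))=0-(-1)+s=1+s$. For the last assertion, if $I$ is unmixed then Lemma~\ref{H_P^n-1-E} already provides $H^{n-1}_I(R)=E_R(R/\m)^s$ for some $s\ge0$, so the formula applies; and since $H^n_I(R)=0$ while $H^{n-2}_I(R)\ne0$ (as $\grade(I)=n-2$), we get $cd(I)=n-2\iff H^{n-1}_I(R)=0\iff s=0\iff\chi(H^{n-2}_I(R))=1$.

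The step needing the most care is the identification $(H^{n-2}_I(R))_z\cong(H^{n-2}_J(A))[z,z^{-1}]$ \emph{as $A_n(K)$-modules}: one must pass to the $z$-saturation, verify that this leaves $(H^{n-2}_I(R))_z$ unchanged and preserves heights, and track the $A_n(K)$-module structure faithfully through the homogenization/localization bookkeeping of \S3 so that Lemmas~\ref{M[z,z^-1]-M-iso}--\ref{chi-equi} genuinely apply (the same bookkeeping already underlies Theorem~\ref{H_0-pts at inf}). A secondary point is to confirm that \cite[Theorem~2]{TJ} applies to the possibly non-prime, non-radical graded ideal $(I,z)$ with one-dimensional quotient.
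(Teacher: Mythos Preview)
Your proposal is correct and follows essentially the same route as the paper: choose a general homogeneous linear form $z$, extract from \ref{Mayer-2} the four-term exact sequence
\[
0\to H^{n-2}_I(R)\to (H^{n-2}_I(R))_z\to H^{n-1}_{(I,z)}(R)\to E_R(R/\m)^s\to 0,
\]
identify $(H^{n-2}_I(R))_z\cong H^{n-2}_J(A)[z,z^{-1}]$ (which has $\chi=0$ by Lemma~\ref{chiH_P^n-2=0}, i.e.\ exactly your Lemma~\ref{M[z,z^-1]-M-iso}/\ref{chi-equi} computation), invoke \cite[Theorem~2]{TJ} for $\chi(H^{n-1}_{(I,z)}(R))=-1$, and conclude by additivity. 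Your extra care with the $z$-saturation step and the explicit derivation of the ``in particular'' clause are refinements the paper leaves implicit, but the architecture is the same.
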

\begin{proof}
  Let $z$ be a homogeneous linear form  and $ R/I$ regular. Set $A=R/(z-1)$.

Let $J= I_*$ be the de-homogenization of $I.$ We have an exact
sequence
\begin{center}
$0\rightarrow H_I^{n-2}(R)\rightarrow
H_{I_z}^{n-2}(R_z)(=H_J^{n-2}(A)[z,z^{-1}])\rightarrow
H_{(I,z)}^{n-1}(R)\rightarrow H_I^{n-1}(R)\rightarrow
(H_I^{n-1}(R))_z\rightarrow H_{(I,z)}^n(R)\rightarrow \cdots$
\end{center}
Since  $z$ is  homogeneous, and $E_R(R/\mathfrak{m})^s$ is
$\mathfrak{m}$ torsion, so $((E_R(R/\mathfrak{m}))^s)_z=0.$  Thus
$(H_I^{n-1}(R))_z =0.$
 So we get an exact sequence of the form
\begin{center}
$0\rightarrow H_I^{n-2}(R)\rightarrow
H_J^{n-2}(A)[z,z^{-1}]\stackrel{\alpha}\longrightarrow
H_{(I,z)}^{n-1}(R)\rightarrow E_R(R/\mathfrak{m})^s\rightarrow 0.$
\end{center}
By \ref{chiH_P^n-2=0}, $\chi(H^{n-2}_J(A)[z, z^{-1}])=0 $. Also by
\cite[Theorem 2]{TJ} we have $\chi(H_{(I,z)}^{n-1}(R)) = -1$. As
$\chi(-)$ is additive \wrt \  exact sequences we get
\begin{align*}
\chi(H_I^{n-2}(R))&=\chi(H_J^{n-2}(A)[z,z^{-1}])-\chi(H_{(I,z)}^{n-1}(R))+\chi(E_R(R/\mathfrak{m})^s)\\
&=0+1+s.
\end{align*}
   Hence the result.
\end{proof}

\begin{lemma}\label{chiH_P^n-2=0}
Let $N$ be a holonomic $A_{n-1}(K)$-module. $M=N[z,z^{-1}].$ Then
$\chi(M)=0$.
\end{lemma}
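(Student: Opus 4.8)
The statement to prove is that if $N$ is a holonomic $A_{n-1}(K)$-module and $M = N[z,z^{-1}]$, then $\chi(M) = 0$, where $\chi$ is the alternating sum of $\dim_K H_i(\underline{\partial}; M)$ with $\underline{\partial} = \partial_1,\dots,\partial_{n-1},\partial_z$ acting on the $A_n(K)$-module $M$. The natural route is to peel off the variable $z$ using the machinery already set up in Section 1, namely Lemma \ref{chi-equi}, which says $\chi(\underline{\partial}, M) = \chi(\underline{\partial}', \bar M) - \chi(\underline{\partial}', M_0)$ where $M_0 = \ker(\partial_z : M \to M)$ and $\bar M = M/\partial_z M$, and $\underline{\partial}' = \partial_1,\dots,\partial_{n-1}$ acts only on these "$z$-reduced" pieces.

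First I would compute $M_0$ and $\bar M$ explicitly. By the computation in Lemma \ref{M[z,z^-1]-M-iso} applied with $M$ there equal to our $N$ (so $M[z,z^{-1}] = N[z,z^{-1}]$), we have $H_1(\partial_z; N[z,z^{-1}]) \cong N$ and $H_0(\partial_z; N[z,z^{-1}]) \cong N$ as $A_{n-1}(K)$-modules. In the notation of Lemma \ref{H1-van}, $H_1(\partial_z; M) = M_0 = \ker \partial_z$ and $H_0(\partial_z; M) = \bar M = M/\partial_z M$. Crucially, the isomorphisms produced in Lemma \ref{M[z,z^-1]-M-iso} are isomorphisms of $A_{n-1}(K)$-modules (the proof there identifies $M_0$ with the degree-zero component $b_0 \in N$ and $\bar M$ with $N/z \cong N$, both compatible with the $\partial_1,\dots,\partial_{n-1}$-action). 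So both $M_0$ and $\bar M$ are isomorphic to $N$ as $A_{n-1}(K)$-modules, hence as modules over $S' = K[\partial_1,\dots,\partial_{n-1}]$ with the Koszul complex on $\underline{\partial}'$. Therefore $H_i(\underline{\partial}'; M_0) \cong H_i(\underline{\partial}'; \bar M)$ for all $i$, and consequently
\[
\chi(\underline{\partial}', \bar M) = \chi(\underline{\partial}', M_0).
\]

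Plugging this into Lemma \ref{chi-equi} gives $\chi(\underline{\partial}, M) = \chi(\underline{\partial}', \bar M) - \chi(\underline{\partial}', M_0) = 0$, which is exactly $\chi(M) = 0$. The only point requiring a little care — and the main (minor) obstacle — is verifying that the isomorphisms $M_0 \cong N$ and $\bar M \cong N$ from Lemma \ref{M[z,z^-1]-M-iso} genuinely respect the $A_{n-1}(K)$-structure, i.e., that the actions of $\partial_1,\dots,\partial_{n-1}$ (and of $x_1,\dots,x_{n-1}$) commute with the identifications used there. This is immediate because those operators act componentwise on $N[z,z^{-1}] = \bigoplus_{i \in \mathbb{Z}} N z^i$ without touching the $z$-grading, so they preserve the degree-zero piece (giving $M_0 \cong N$) and descend to the quotient by $z$ (giving $\bar M \cong N/z N \cong N$); this commutation is already implicitly used when Lemma \ref{M[z,z^-1]-M-iso} asserts its isomorphisms are "as $A_n(K)$-modules". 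Once this is noted, the proof is a one-line consequence of Lemmas \ref{chi-equi} and \ref{M[z,z^-1]-M-iso}.
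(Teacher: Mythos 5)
Your proposal is correct and follows exactly the paper's own argument: apply Lemma \ref{chi-equi} to split $\chi(M)$ as $\chi(\underline{\partial}',\bar M)-\chi(\underline{\partial}',M_0)$, then use Lemma \ref{M[z,z^-1]-M-iso} to identify both $\bar M$ and $M_0$ with $N$ as $A_{n-1}(K)$-modules. Your added remark checking that those identifications respect the $\partial_1,\dots,\partial_{n-1}$-action is a worthwhile clarification of a point the paper leaves implicit, but the route is the same.
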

\begin{proof}
 Set $M_0 =
H_1(\partial_z;M)$, and $\bar{M}=H_0(\partial_z;M).$ By Lemma
\ref{chi-equi} we have
 \[\chi(M) = \chi(\bar{M})-\chi(M_0).\]
As $M=N[z,z^{-1}]$. By Lemma \ref{M[z,z^-1]-M-iso} $\bar{M}= M_0=N$.
So $\chi(M)=0.$

\end{proof}

\begin{corollary}\label{H_P^n-1-E-cor}
 Let $I$ be an unmixed graded
ideal of $R=K[x_1,\cdots,x_n]$. Let ht$(I)=n-2$. Then
$\chi(H_I^{n-2}(R))=1$ if Proj$(R/I)$ is connected or if $R/I$ is
Cohen-Macaulay.
\end{corollary}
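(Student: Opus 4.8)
The plan is to reduce the assertion to the single vanishing $H^{n-1}_I(R)=0$ and then to feed in a connectedness criterion. Since $I$ is unmixed of height $n-2$, Lemma \ref{H_P^n-1-E} gives $H^{n-1}_I(R)\cong E_R(R/\m)^s$ for some $s\geq 0$, and Theorem \ref{HI^n-1-E^s} then yields $\chi(H^{n-2}_I(R))=1+s$. So it suffices to prove $s=0$ in each of the two cases, i.e.\ $H^{n-1}_I(R)=0$ (equivalently, by the last sentence of Theorem \ref{HI^n-1-E^s}, $cd(I)=n-2$). Because $H^{n-1}_I(R)\cong E_R(R/\m)^s$ is graded and supported only at $\m$, we have $H^{n-1}_I(R)=0$ if and only if $(H^{n-1}_I(R))_\m=H^{n-1}_{IR_\m}(R_\m)=0$, and since $\wh{R}_\m$ is faithfully flat over $R_\m$ this is equivalent to $H^{n-1}_{I\wh{R}_\m}(\wh{R}_\m)=0$ — exactly the faithfully-flat reduction already used in Lemma \ref{lem-H_0-pts at inf}. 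Note $\wh{R}_\m$ is a complete regular local ring of dimension $n$ whose residue field $K$ is algebraically closed, hence separably closed, and $\dim \wh{R}_\m/I\wh{R}_\m=2$ since $I$ is unmixed of height $n-2$.

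Assume first that $\operatorname{Proj}(R/I)$ is connected. Applying Proposition \ref{Proj-Spec^0} to the standard graded ring $R/I$, the punctured spectrum of $\wh{R}_\m/I\wh{R}_\m$ is connected. Now invoke the second vanishing theorem for local cohomology (Ogus in characteristic zero; Huneke--Lyubeznik; see also \cite{SI}): for a complete regular local ring $(\wh{R}_\m,\M)$ of dimension $n$ with separably closed residue field and an ideal $J$, one has $H^{n-1}_J(\wh{R}_\m)=0$ precisely when $\dim \wh{R}_\m/J\geq 2$ and the punctured spectrum of $\wh{R}_\m/J$ is connected. Both conditions hold for $J=I\wh{R}_\m$, so $H^{n-1}_I(R)=0$ and therefore $\chi(H^{n-2}_I(R))=1$.

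Now suppose $R/I$ is \CM. Then $R/I$ satisfies Serre's condition $S_2$, so the complete local ring $\wh{R}_\m/I\wh{R}_\m$ is $S_2$, hence connected in codimension one; as it has dimension $2$, deleting its closed point (of codimension $2$) leaves a connected punctured spectrum. By Proposition \ref{Proj-Spec^0} this is equivalent to $\operatorname{Proj}(R/I)$ being connected, so this case reduces to the previous one. (Alternatively one can quote directly that $cd(I)=\htt(I)$ whenever $R/I$ is \CM, which again forces $s=0$.)

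The only non-formal ingredient is the second vanishing theorem; everything else is the graded-to-local bookkeeping already carried out in Section 4 together with Proposition \ref{Proj-Spec^0}. Accordingly, the main thing to be careful about is that, after localizing at $\m$ and completing, all hypotheses of the second vanishing theorem are genuinely in force — regularity (Gorenstein would suffice), a separably closed residue field, and $\dim R/I\geq 2$ — which hold automatically here, and that the passage from the graded module $H^{n-1}_I(R)$ to $H^{n-1}_{I\wh{R}_\m}(\wh{R}_\m)$ faithfully detects vanishing, which it does since the module in question is Artinian and supported at $\m$.
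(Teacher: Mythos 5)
Your proposal is correct and follows essentially the same route as the paper: reduce to $H^{n-1}_I(R)=0$ via Theorem \ref{HI^n-1-E^s}, pass to $\wh{R}_\m$ by graded/faithfully flat descent, invoke the second vanishing theorem \cite[14.7]{SI} together with Proposition \ref{Proj-Spec^0}, and treat the \CM\ case by connectedness of the punctured spectrum. The only flaw is the parenthetical ``alternative'': the statement that $cd(I)=\htt(I)$ whenever $R/I$ is \CM\ is false in general in characteristic zero (e.g.\ for the ideal of $2\times 2$ minors of a generic $2\times 3$ matrix one has $\htt(I)=2$, $R/I$ \CM, but $cd(I)=3$); it holds here only because $\dim R/I=2$, where it is a consequence of the very second-vanishing argument you already gave, so it should not be quoted as an independent general fact.
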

\begin{proof}
As ht$(I)= n-2$ so $\dim (R/I)=2.$ We get $\dim
(R/I)_{\mathfrak{m}}=2.$ Therefore
$\dim(\hat{R}_{\mathfrak{m}}/I\hat{R_{\mathfrak{m}}})=2.$ As
Proj$(R/I)$ is connected, by Proposition \ref{Proj-Spec^0} we get
Spec$^0(\hat{R}_{\mathfrak{m}}/I\hat{R_{\mathfrak{m}}})$ is
connected. Therefore by  \cite[14.7]{SI} we get
$H^{n-1}_{I\hat{R}_{\mathfrak{m}}}(\hat{R}_{\mathfrak{m}})=0.$
Similarly if $R/I$ is Cohen-Macaulay of $\dim = 2$ then
$\hat{R}/\hat{I}$ is Cohen-Macaulay of $\dim =2$. So
$Spec^0(\hat{R}/\hat{I})$ is connected by proposition
\ref{Proj-Spec^0}.  As $\hat{R}_{\mathfrak{m}}$ is faithfully flat
$R_{\mathfrak{m}}$-algebra so
$H^{n-1}_{IR_{\mathfrak{m}}}(R_{\mathfrak{m}})=0.$ Also note that
$H^{n-1}_{I}(R)$ is graded $R$ module
 and $-\otimes_RR_{\mathfrak{m}}$ is faithfully exact functor on graded $R$
 modules. Therefore
$H^{n-1}_{I}(R)=0.$ Hence by Theorem \ref{HI^n-1-E^s},
$\chi(H^{n-2}_I(R))=1.$
\end{proof}

\begin{lemma}\label{htI=n-3}
Let $R=K[x_1,\cdots,x_n]$, $\mathfrak{m}=(x_1,\cdots,x_n).$ Let $I$
be an unmixed, graded, height $n-3$ ideal in $R.$ Suppose
Proj$(R/I)$ is Cohen-Macaulay. Then
$H^{n-2}_I(R)=E_R(R/\mathfrak{m})^s$ for some $s\geq 0.$
\end{lemma}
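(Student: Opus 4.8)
The plan is to mimic the proof of Lemma~\ref{H_P^n-1-E}: show that $H^{n-2}_I(R)$ is supported only at $\mathfrak{m}$, is graded, and hence is a direct sum of copies of $E_R(R/\mathfrak{m})$. Since $\htt(I)=n-3$, we have $\dim(R/I)=3$, so by Grothendieck vanishing $H^j_I(R)=0$ for $j>n$, and the top possibly-nonzero modules are $H^{n-2}_I(R)$ and $H^{n-1}_I(R)$. The Hartshorne--Lichtenbaum vanishing theorem (applied after localizing and completing at a maximal ideal $\mathfrak{n}\supsetneq I$, $\mathfrak{n}\neq\mathfrak{m}$, exactly as in Lemmas~\ref{HLV}, \ref{H_P^n-1-E}, \ref{lem-H_0-pts at inf}) should already force $H^{n-1}_I(R)_{\mathfrak{n}}=0$, and we want to show the stronger statement that $H^{n-2}_I(R)$ is also killed by localizing away from $\mathfrak{m}$.

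First I would reduce to a local question: for a prime $\mathfrak{n}$ with $I\subseteq\mathfrak{n}$, $\mathfrak{n}\neq\mathfrak{m}$, one has $\dim(R_{\mathfrak{n}}/IR_{\mathfrak{n}})\le 2$, and $\dim R_{\mathfrak{n}}=\htt\mathfrak{n}$. If $\htt\mathfrak{n}\le n-2$ then $H^{n-2}_I(R)_{\mathfrak{n}}=H^{n-2}_{IR_{\mathfrak{n}}}(R_{\mathfrak{n}})=0$ by Grothendieck vanishing, since $\dim R_{\mathfrak{n}}<n-2$. The remaining case is $\htt\mathfrak{n}=n-1$, i.e. $R_{\mathfrak{n}}$ is a regular local ring of dimension $n-1$ and $IR_{\mathfrak{n}}$ has $\dim(R_{\mathfrak{n}}/IR_{\mathfrak{n}})=2$ (if it were smaller, again Grothendieck vanishing applies). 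Here I would pass to the completion $\widehat{R_{\mathfrak{n}}}$, which is a domain-quotient situation only after passing to a minimal prime; the point is that $\operatorname{Proj}(R/I)$ being Cohen--Macaulay, together with unmixedness, should guarantee that $\widehat{R_{\mathfrak{n}}}/\widehat{I}$ satisfies Serre's condition $S_2$ (equivalently, is connected in codimension one / has connected punctured spectrum in the relevant sense), so that the Hartshorne--Lichtenbaum / second vanishing theorem \cite[14.7, 15.5]{SI} gives $H^{n-2}_{IR_{\mathfrak{n}}}(R_{\mathfrak{n}})\otimes\widehat{R_{\mathfrak{n}}}=0$; faithful flatness then yields $H^{n-2}_I(R)_{\mathfrak{n}}=0$. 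Thus $\operatorname{Supp} H^{n-2}_I(R)\subseteq\{\mathfrak{m}\}$, and since $H^{n-2}_I(R)$ is a graded $R$-module supported only at the irrelevant maximal ideal, $\operatorname{Ass}_R H^{n-2}_I(R)=\{\mathfrak{m}\}$ and it is $\mathfrak{m}$-torsion; a graded $\mathfrak{m}$-torsion module whose only associated prime is $\mathfrak{m}$ is a (possibly infinite) direct sum of shifted copies of $E_R(R/\mathfrak{m})$, but as in Lemma~\ref{H_P^n-1-E} the holonomicity / finite generation over the Weyl algebra (Lyubeznik) forces finiteness of the number of copies, so $H^{n-2}_I(R)\cong E_R(R/\mathfrak{m})^s$ for some $s\ge 0$.

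The main obstacle is the case $\htt\mathfrak{n}=n-1$: deducing $H^{n-2}_{IR_{\mathfrak{n}}}(R_{\mathfrak{n}})=0$ requires a genuine \emph{second} vanishing theorem for local cohomology in the regular local ring $R_{\mathfrak{n}}$ of dimension $n-1$ with a height-$(n-3)$ ideal $IR_{\mathfrak{n}}$ — this is exactly where the Cohen--Macaulayness (or at least $S_2$) of $\operatorname{Proj}(R/I)$ must be used, to control the connectedness of the punctured spectrum of $\widehat{R_{\mathfrak{n}}}/\widehat{IR_{\mathfrak{n}}}$. One subtlety is that $R_{\mathfrak{n}}/IR_{\mathfrak{n}}$ need not be a domain, so one cannot invoke Hartshorne--Lichtenbaum directly; instead one argues via the connectedness dimension and \cite[14.7, 15.5]{SI}, checking that the $S_2$ property descends from $\operatorname{Proj}(R/I)$ to all these local quotients of dimension $\le 2$ (in dimension $2$, Cohen--Macaulay $\Rightarrow$ $S_2$ $\Rightarrow$ connected punctured spectrum). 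Everything else — the Grothendieck vanishing in lower height, the gradedness argument, and the reduction from "direct sum of $E$'s" to "finitely many copies" — is routine and parallels Lemma~\ref{H_P^n-1-E}.
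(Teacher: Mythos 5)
There is a genuine gap at the intermediate case $\htt\mathfrak{n}=n-2$. You lump all primes of height $\le n-2$ together and dismiss them by Grothendieck vanishing "since $\dim R_{\mathfrak{n}}<n-2$", but when $\htt\mathfrak{n}=n-2$ one has $\dim R_{\mathfrak{n}}=n-2$, so $H^{n-2}_{IR_{\mathfrak{n}}}(R_{\mathfrak{n}})$ is the \emph{top} local cohomology of $R_{\mathfrak{n}}$ and Grothendieck vanishing says nothing about it. This is exactly where the unmixedness hypothesis enters in the paper's proof: since $I$ is unmixed of height $n-3$, the ideal $IR_{\mathfrak{n}}$ is not $\mathfrak{n}R_{\mathfrak{n}}$-primary, hence (after completing and using faithful flatness) Hartshorne--Lichtenbaum gives $H^{n-2}_{I\widehat{R}_{\mathfrak{n}}}(\widehat{R}_{\mathfrak{n}})=0$. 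Your proposal never uses unmixedness in this case, so as written this case is simply unproved; the fix is the same HLV argument you already deploy elsewhere, but it must be made explicitly.

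The remaining structure of your argument does track the paper's: reduce to showing $\Ass_R H^{n-2}_I(R)\subseteq\{\mathfrak{m}\}$ (for a graded module it suffices to test graded primes, which have height at most $n-1$ unless they equal $\mathfrak{m}$), kill the height $n-3$ primes by Grothendieck vanishing, and handle height $n-1$ via connectedness of the punctured spectrum and the second vanishing theorem. One further point you gloss over in the height $n-1$ case: the second vanishing theorem \cite[14.7]{SI} requires a complete local ring with separably closed residue field, and $R_{\mathfrak{n}}/\mathfrak{n}R_{\mathfrak{n}}$ need not be such. The paper handles this by a gonflement argument (Bourbaki) producing a faithfully flat complete regular local extension $S$ of $R_{\mathfrak{n}}$ of the same dimension with algebraically closed residue field, to which Cohen--Macaulayness of $(R/I)_{\mathfrak{n}}$ transfers, giving $\operatorname{Spec}^0(S/IS)$ connected and hence the desired vanishing; simply completing $R_{\mathfrak{n}}$ is not enough. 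With these two repairs your outline matches the paper's proof.
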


\begin{proof}
It suffices to show that Ass$_RH^{n-2}_I(R)\subset
\{\mathfrak{m}\}.$ Let $P\in $ Ass$_RH^{n-2}_I(R).$ As
$H^{n-2}_I(R)$ is graded, so $P$ graded prime. Note that $I\subset
P.$ Thus ht$(P)\geq n-3.$

 \textit{Case}$(1):$ ht$(P)=n-3.$ Then
$\dim R_P = n-3$. Therefore $(H^{n-2}_I(R))_P  =
H^{n-2}_{IR_P}(R_P)=0$ by Grothendieck vanishing theorem. Therefore
$P\not \in$Ass$_R(H^{n-2}_I(R))$.

\textit{Case}$(2):$ Let ht$(P)=n-2.$ As $I$ is unmixed and
ht$(I)=n-3$ we get $IR_P$ is not primary to $PR_P.$ This implies
$I\hat{R}_P$ is not primary to $P\hat{R}_P.$  By Hartshorne-
Liechtenbaum vanishing theorem \cite[14.1]{SI} we get
$H^{n-2}_{I\hat{R}_P}(\hat{R}_P)=0.$ Also note that
$H^{n-2}_{IR_P}(R_P)\otimes_{R_P}\hat{R}_P=H^{n-2}_{I\hat{R}_P}(\hat{R}_P),$
and $\hat{R}_P$ is faithfully flat $R_P$ algebra. Therefore
$(H^{n-2}_I(R))_P = H^{n-2}_{IR_P}(R_P)=0.$ Thus $P\notin
\Ass_R(H^{n-2}_I(R)).$

 \textit{Case}$(3):$ Suppose ht$(P)=n-1.$ As Proj$(R/I)$ is
Cohen-Macaulay, and $P\in$ Proj$(R/I),$ we get $(R/I)_P=R_P/IR_P$ is
Cohen-Macaulay.

 By \cite[App. Th\'{e}or\'{e}me 1, Corollaire]{BourIX}, there exist a complete regular local ring $(S, \mathfrak{n})$ such that\\
$(1)$ $R_P\hookrightarrow S$ is flat and $\dim S=\dim R_P$.\\
$(2)$ $PR_PS=\mathfrak{n}.$\\
$(3)$ $S/\mathfrak{n} =$ algebraic closure of $R_P/PR_P.$\\
As $R_P/IR_P$ is Cohen-Macaulay, we get $R_P/IR_P\otimes_{R_P}S$
$\quad$is Cohen-Macaulay. Thus $S/IR_PS$ is Cohen-Macaulay of
dimension $2.$ Therefore Spec$^0(S/IR_PS)$ is connected. Note that
$\dim S= \dim R_P=n-1.$ We get $H^{n-2}_{IR_PS}(S)=0.$ So
$H^{n-2}_{IR_P}(R_P)\otimes_{R_P}S=H^{n-2}_{IR_PS}(S)=0.$ Also $S$
is faithfully flat extension of $R_P.$ Therefore
$(H^{n-2}_I(R))_P=H^{n-2}_{IR_P}(R_P)=0.$ Hence $P\not
\in$Ass$_R(H^{n-2}_I(R)).$

\end{proof}

\begin{theorem}\label{chi-HP^n-3=s-1}
Let $R=K[x_1,\cdots,x_n]$ and $P\subset R$ be a prime ideal. Suppose
$V(P)$ is a Cohen-Macaulay surface i.e. ht$P=n-3,$ and Proj$(R/P)$
is Cohen-Macaulay. Then $\chi(H^{n-3}_P(R))= s-1,$ where
$H^{n-2}_P(R)=E(R/\mathfrak{m})^s.$
\end{theorem}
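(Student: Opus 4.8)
The plan is to mimic the argument used in Theorem \ref{HI^n-1-E^s}, replacing $n-2$ by $n-3$ throughout. First I would pick a sufficiently general homogeneous linear form $z$ so that $R/(P,z)$ is the homogeneous coordinate ring of a general hyperplane section of the surface $V(P)$; by Bertini this is again Cohen-Macaulay (in fact a Cohen-Macaulay \emph{curve}), and $\htt(P,z)=n-2$. Set $A=R/(z-1)$ and let $J=P_*$ be the de-homogenization of $P$ with respect to $z$. Then $(H^i_P(R))_z=H^i_J(A)[z,z^{-1}]$ for all $i$, and since $z$ is homogeneous and $H^{n-2}_P(R)=E_R(R/\m)^s$ is $\m$-torsion, we get $(H^{n-2}_P(R))_z=0$. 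Feeding this into the Mayer--Vietoris sequence \ref{Mayer-2} for the pair $(P,z)$ in degrees $n-3,n-2$ yields an exact sequence
\[
0\to H^{n-3}_P(R)\to H^{n-3}_J(A)[z,z^{-1}]\to H^{n-2}_{(P,z)}(R)\to E_R(R/\m)^s\to 0.
\]

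Next I would compute the Euler characteristics of the three known terms. By Lemma \ref{chiH_P^n-2=0} (applied with $N=H^{n-3}_J(A)$, a holonomic $A_{n-1}(K)$-module) we have $\chi(H^{n-3}_J(A)[z,z^{-1}])=0$. For the injective term, $\chi(E_R(R/\m)^s)=s\cdot\chi(E_R(R/\m))$; I expect $\chi(E_R(R/\m))=1$ (this is essentially Theorem 2 of \cite{TJ} applied to $H^n_\m(R)=E_R(R/\m)$, which has a single point in its support), so this contributes $s$. The remaining term is $\chi(H^{n-2}_{(P,z)}(R))$: here $(P,z)$ is a height $n-2$ graded ideal whose quotient defines a Cohen-Macaulay curve, so $H^{n-1}_{(P,z)}(R)$ is a direct sum of copies of $E_R(R/\m)$ by Lemma \ref{H_P^n-1-E}, and Theorem \ref{HI^n-1-E^s} gives $\chi(H^{n-2}_{(P,z)}(R))=1+t$ where $H^{n-1}_{(P,z)}(R)=E_R(R/\m)^t$. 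Then additivity of $\chi$ along the four-term exact sequence gives
\[
\chi(H^{n-3}_P(R))=0-(1+t)+s=s-1-t,
\]
so the whole thing reduces to showing $t=0$, i.e. $H^{n-1}_{(P,z)}(R)=0$.

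To see $t=0$ I would invoke Corollary \ref{H_P^n-1-E-cor}: the ideal $(P,z)$ is unmixed of height $n-2$ (general hyperplane section of an unmixed ideal), and $\mathrm{Proj}(R/(P,z))$ is a Cohen-Macaulay curve, hence connected, so $H^{n-1}_{(P,z)}(R)=0$ and $t=0$. Alternatively, since $R/(P,z)$ is itself Cohen-Macaulay of dimension $2$, Corollary \ref{H_P^n-1-E-cor} applies directly in the Cohen-Macaulay case. This forces $\chi(H^{n-2}_{(P,z)}(R))=1$ and yields $\chi(H^{n-3}_P(R))=s-1$.

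The main obstacle I anticipate is the Bertini-type step: one must be careful that a general hyperplane section of a Cohen-Macaulay projective surface is again arithmetically Cohen-Macaulay (not merely Cohen-Macaulay as a scheme), and that $(P,z)$ is unmixed with $\sqrt{(P,z)}$ of the right height. This is standard over an algebraically closed field of characteristic zero, but it is the one place where genericity of $z$ is genuinely used, and it is what makes Lemma \ref{H_P^n-1-E}, Theorem \ref{HI^n-1-E^s} and Corollary \ref{H_P^n-1-E-cor} applicable to $(P,z)$. A minor secondary point is verifying $\chi(E_R(R/\m))=1$; this follows from \cite{TJ} since $E_R(R/\m)\cong H^n_\m(R)$ and $V(\m)$ is a single point, but it should be stated explicitly.
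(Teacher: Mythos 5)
Your proposal is correct and follows essentially the same route as the paper's proof: the Mayer--Vietoris sequence for $(P,z)$, the vanishing $(H^{n-2}_P(R))_z=0$, Lemma \ref{chiH_P^n-2=0} for the localized term, Corollary \ref{H_P^n-1-E-cor} (which you unpack into Theorem \ref{HI^n-1-E^s} plus the vanishing of $H^{n-1}_{(P,z)}(R)$) for the middle term, and additivity of $\chi$. The only cosmetic difference is that the paper deduces $H^{n-1}_{(P,z)}(R)=0$ by exhibiting it as a submodule of $H^{n-1}_P(R)=0$, whereas you argue via connectedness of the hyperplane section; both work.
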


\begin{proof}
Let $z$ be a homogeneous linear form which is $R/P$ regular.  Note
$E(R/\m)_z = 0$. Let $Q:=$ de-homogenization of $P$ with respect to
$z$. Set $A=R/(z-1).$ Now consider the exact sequence
\begin{center}
$0\rightarrow H^{n-3}_P(R)\rightarrow (H^{n-3}_P(R))_z\rightarrow
H^{n-2}_{(P+(z))}(R)\rightarrow H^{n-2}_P(R)\rightarrow
(H^{n-2}_P(R))_z=0;$
\end{center}
We also get $H^{n-1}_{(P+(z))}(R)$ is a sub-module of
$H^{n-1}_P(R)$. By an argument similar to Lemma 3.4 we get that
$H^{n-1}_P(R) = 0$. So $H^{n-1}_{(P+(z))}(R) =0$. By
\ref{H_P^n-1-E-cor} we get $\chi(H^{n-2}_{(P+(z))}(R))=1.$ Note that
\begin{align*}
 H^{n-3}_{P_{z}}(R_z) & = H^{n-3}_{QS}(S),\quad \text{where}\quad
S=A[z, z^{-1}].\\
& =H^{n-3}_{Q}(A)[z, z^{-1}].
\end{align*}
By \ref{chiH_P^n-2=0} we get $\chi(H^{n-3}_{Q}(A)[z, z^{-1}])=0.$ As
$\chi(E(R/\mathfrak{m})^s)=s,$ by taking $\chi(-)$ in the above
exact sequence we get
\begin{align*}
\chi(H^{n-3}_P(R))&=s-\chi(H^{n-2}_{P+(z)}(R))\\
&=s-1.
\end{align*}
\end{proof}

The proof of the following result is similar to the proof of Theorem
\ref{chi-HP^n-3=s-1}.
\begin{theorem}\label{chi-HI^n-3=s-1}
Let $R=K[x_1,\cdots,x_n]$,$\mathfrak{m}=(x_1,\cdots,x_n)$, and $I$
an ideal in $R.$ Assume that $R/I$ Cohen-Macaulay and ht$(I)=n-3.$
Then $\chi(H^{n-3}_I(R))=s-1,$ where
$H^{n-2}_I(R)=E(R/\mathfrak{m})^s.$
\end{theorem}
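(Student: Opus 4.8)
The plan is to mirror the proof of Theorem \ref{chi-HP^n-3=s-1} almost verbatim, replacing the prime ideal $P$ by the ideal $I$ and the geometric hypothesis ``$\operatorname{Proj}(R/P)$ is \CM'' by the stronger algebraic hypothesis ``$R/I$ is \CM''. First I would pick a homogeneous linear form $z$ which is $R/I$-regular (this is possible because $R/I$ is \CM \ of positive dimension, hence has a homogeneous system of parameters starting with a general linear form), set $A = R/(z-1)$, and let $Q = I_*$ be the de-homogenization of $I$ with respect to $z$. Since $\operatorname{ht}(I) = n-3$ and $z$ is a nonzerodivisor, $\operatorname{ht}(Q) = n-3$ in $A = K[x_1,\ldots,x_{n-1}]$, so $Q$ is an ideal of height $(n-1)-2$ in a polynomial ring in $n-1$ variables.

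Next I would write down the Mayer--Vietoris sequence of \ref{Mayer-2} for the pair $I$, $(z)$:
\begin{center}
$0\rightarrow H^{n-3}_I(R)\rightarrow (H^{n-3}_I(R))_z\rightarrow H^{n-2}_{(I,z)}(R)\rightarrow H^{n-2}_I(R)\rightarrow (H^{n-2}_I(R))_z\rightarrow \cdots$
\end{center}
Because $H^{n-2}_I(R) = E_R(R/\m)^s$ is $\m$-torsion and $z$ is homogeneous, $(H^{n-2}_I(R))_z = 0$; this truncates the sequence. One also sees from the same Mayer--Vietoris sequence that $H^{n-1}_{(I,z)}(R)$ embeds in $H^{n-1}_I(R)$, and since $R/I$ is \CM \ of dimension $3$, an argument as in Lemma \ref{lem-H_0-pts at inf} (pass to $\hat R_\m$, use that $\hat R/\hat I$ is \CM \ of dimension $3$, hence $\operatorname{Spec}^0$ connected, and invoke \cite[14.7]{SI}) gives $H^{n-1}_I(R) = 0$, whence $H^{n-1}_{(I,z)}(R) = 0$ too. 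Then Corollary \ref{H_P^n-1-E-cor} applied to the unmixed height-$(n-2)$ ideal $(I,z)$ in $R$ — note $R/(I,z)$ is \CM \ of dimension $2$, so the corollary applies — yields $\chi(H^{n-2}_{(I,z)}(R)) = 1$.

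Finally I would identify $(H^{n-3}_I(R))_z = H^{n-3}_{QS}(S) = H^{n-3}_Q(A)[z,z^{-1}]$ where $S = A[z,z^{-1}]$, using the standard base-change for localization of local cohomology recalled in the ``Homogenization and De-homogenization'' paragraph; by Lemma \ref{chiH_P^n-2=0} this has $\chi = 0$. Since $\chi(-)$ is additive on short exact sequences of holonomic modules and $\chi(E(R/\m)^s) = s$ (by Theorem $1$ of \cite{TJ}, a point already invoked in the paper), applying $\chi(-)$ to the truncated exact sequence $0\rightarrow H^{n-3}_I(R)\rightarrow H^{n-3}_Q(A)[z,z^{-1}]\rightarrow H^{n-2}_{(I,z)}(R)\rightarrow E(R/\m)^s\rightarrow 0$ gives $\chi(H^{n-3}_I(R)) = 0 - 1 + s = s-1$, as claimed.

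The only step requiring genuine care — and the one I expect to be the main obstacle — is the vanishing $H^{n-1}_I(R) = 0$. In Theorem \ref{chi-HP^n-3=s-1} this is handled ``by an argument similar to Lemma 3.4'', but there the hypothesis was $\operatorname{Proj}(R/P)$ \CM, which forced the completion argument to go through a faithfully flat \CM \ extension at each relevant prime (as in Lemma \ref{htI=n-3}); here the hypothesis $R/I$ \CM \ globally is cleaner, so the completion $\hat R/\hat I$ is directly \CM \ of dimension $3$ and one only needs connectedness of its punctured spectrum via Proposition \ref{Proj-Spec^0} together with Hartshorne--Lichtenbaum/second vanishing \cite[14.7]{SI}. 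One must also double-check unmixedness of $(I,z)$: since $R/I$ is \CM, it is unmixed, and $z$ is $R/I$-regular, so $R/(I,z)$ is again \CM \ hence unmixed, which is exactly what Corollary \ref{H_P^n-1-E-cor} needs.
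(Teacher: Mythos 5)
Your proof is correct and follows essentially the same route as the paper's own argument, which simply observes that $R/(I+(z))$ is \CM, invokes Corollary \ref{H_P^n-1-E-cor} to get $\chi(H^{n-2}_{(I,z)}(R))=1$, and then repeats the computation from Theorem \ref{chi-HP^n-3=s-1}. The one step you single out as the main obstacle, the vanishing $H^{n-1}_I(R)=0$, is in fact not needed here (your argument for it is fine, but the Mayer--Vietoris sequence already truncates at $(H^{n-2}_I(R))_z=0$, and the hypothesis of Corollary \ref{H_P^n-1-E-cor} is verified directly by the \CM ness of $R/(I,z)$ rather than via $H^{n-1}_{(I,z)}(R)=0$).
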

\begin{proof}
Let $z$ be a homogeneous linear form which is $R/I$ regular. Note
that \quad  $R/(I+(z))$ will be Cohen-Macaulay. So
$\chi(H^{n-2}_{(I+(z))}(R))=1$ by corollary \ref{H_P^n-1-E-cor}. The
rest of the proof similar to that of Theorem \ref{chi-HP^n-3=s-1}.
\end{proof}

\section{Non-singular surfaces}
The main result of this section is Theorem \ref{non-sing-surf}. We
begin with an easy result.
\begin{lemma}
 Let $R=K[x_1,\cdots,x_n]$
and $\mathfrak{m}=(x_1,\cdots,x_n).$ Let $P\subset R$ be a
homogeneous prime ideal of height $g.$ Suppose Proj$(R/P)$ is smooth
$i.e.$ $(R/P)_Q$ is a regular local ring $\forall$ $Q\not =
\mathfrak{m}$ and $Q$ homogeneous. Then for $i > g$;
$H^i_P(R)=E(R/\mathfrak{m})^{s_i}$ for some $s_i \geq 0$.
\end{lemma}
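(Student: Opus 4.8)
The plan is to reduce the whole statement to the assertion
$$\operatorname{Ass}_R H^i_P(R) \subseteq \{\mathfrak m\} \quad\text{for } i > g,$$
and then conclude exactly as in Lemma~\ref{H_P^n-1-E}. Concretely, once we know that $H^i_P(R)$ is supported only at $\mathfrak m$, it is a graded module of injective dimension $0$: Lyubeznik's finiteness theorem gives that each $H^i_P(R)$ has finite Bass numbers and injective dimension at most the dimension of its support. Hence $H^i_P(R)$ is a direct sum of copies of $E_R(R/\mathfrak m)$, the number of copies being $s_i = \dim_K \operatorname{Hom}_R(R/\mathfrak m, H^i_P(R)) < \infty$, so $H^i_P(R) \cong E_R(R/\mathfrak m)^{s_i}$ with $s_i \geq 0$ finite.

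To prove the claim on associated primes, let $Q \in \operatorname{Ass}_R H^i_P(R)$ with $i > g$. Since $H^i_P(R)$ is graded, $Q$ is homogeneous; since $H^i_P(R)$ is $P$-torsion, $P \subseteq Q$. Assume $Q \ne \mathfrak m$. Then $Q/P$ is a homogeneous prime of $R/P$ different from the irrelevant ideal, so it corresponds to a point of $\operatorname{Proj}(R/P)$, and the smoothness hypothesis gives that $(R/P)_Q = R_Q/PR_Q$ is a regular local ring. Now $R_Q$ is itself regular, being a localization of the polynomial ring $R$, and $PR_Q$ is a prime of $R_Q$ with regular quotient; by the standard fact that such a prime is generated by part of a regular system of parameters, $PR_Q$ is generated by an $R_Q$-regular sequence of length $\operatorname{ht}(PR_Q) = \operatorname{ht}(P) = g$. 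Since local cohomology commutes with localization, $(H^i_P(R))_Q = H^i_{PR_Q}(R_Q)$, and the local cohomology of a ring with respect to an ideal generated by a regular sequence of length $g$ vanishes outside cohomological degree $g$. Hence $(H^i_P(R))_Q = 0$ for $i > g$, contradicting $Q \in \operatorname{Ass}_R H^i_P(R)$. Therefore $Q = \mathfrak m$, which establishes the claim.

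All the intermediate steps — localization commuting with local cohomology, vanishing of $H^i$ of a regular-sequence ideal outside the middle degree, and the regular-system-of-parameters fact in a regular local ring — are routine. The only point needing genuine external input is the final passage from ``$H^i_P(R)$ supported only at $\mathfrak m$'' to ``$H^i_P(R) \cong E_R(R/\mathfrak m)^{s_i}$ with $s_i$ finite'': this rests on Lyubeznik's finiteness theorem for the Bass numbers and injective dimension of local cohomology modules, which is the same input already invoked in the proof of Lemma~\ref{H_P^n-1-E}, so I would simply cite that lemma and the underlying result rather than reprove it.
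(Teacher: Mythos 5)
Your proof is correct and follows essentially the same route as the paper: localize at a homogeneous prime $Q \neq \mathfrak{m}$ containing $P$, use the smoothness hypothesis to see that $PR_Q$ is generated by part of a regular system of parameters (hence by a regular sequence of length $g$), and conclude $H^i_{PR_Q}(R_Q)=0$ for $i>g$, so that $H^i_P(R)$ is supported only at $\mathfrak{m}$. The only cosmetic difference is that you organize the reduction via associated primes and spell out the final passage to $E_R(R/\mathfrak{m})^{s_i}$ (via finiteness of Bass numbers), which the paper leaves implicit by analogy with Lemma~\ref{H_P^n-1-E}.
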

\begin{proof}
Let $Q$ be a homogeneous prime ideal in $R$ and $P\varsubsetneq Q
\varsubsetneq \mathfrak{m}.$ Then $(R/P)_Q=R_Q/PR_Q$ is a regular
local ring. So $PR_Q=(a_1,\cdots, a_g)$ where $a_1,\ldots,a_g$ is
part of a regular system of parameters of $R_Q$. Therefore
$(H^i_P(R))_Q=H^i_{PR_Q}(R_Q)=0$ for $i > g.$ Thus $H^i_P(R)$ is
supported only at $\mathfrak{m}$ for $i > g.$
\end{proof}
We now extend Theorem \ref{chi-HP^n-3=s-1}.
\begin{theorem}\label{non-sing-surf}
Let  $R=K[x_1,\cdots,x_n]$, $\mathfrak{m}=(x_1,\cdots,x_n)$, and
$P\subset R$ be a homogeneous prime ideal. Suppose $V(P) $ is $r$
dimensional non-singular variety in $\mathbb{P}^{n-1}_K$ with $r\geq
2.$ Then
\begin{align*}
\chi(H^{n-r-1}_P(R))=(-1)^r(-1+\sum_{j\geq n-r}(-1)^{n-j}s_j(P)),
\end{align*}
where $H^j_P(R)=E(R/\mathfrak{m})^{s_j(P)}$ for $j\geq n-r$.
\end{theorem}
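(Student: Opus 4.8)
The plan is to mimic the proof of Theorem \ref{chi-HP^n-3=s-1}, but now iterate the argument over the whole range $j \geq n-r$ instead of handling a single local cohomology module supported at $\m$. First I would choose a homogeneous linear form $z$ which is $R/P$-regular, let $Q$ be the de-homogenization of $P$ with respect to $z$, and set $A = R/(z-1)$. The crucial structural input is the preceding Lemma: for every $j > \htt(P) = n-r-1$, i.e. for every $j \geq n-r$, we have $H^j_P(R) = E(R/\m)^{s_j(P)}$, and since $z$ is homogeneous and $E(R/\m)$ is $\m$-torsion, $(H^j_P(R))_z = 0$ for all such $j$. Therefore the Mayer--Vietoris sequence \ref{Mayer-2} for the ideals $P$ and $(P,z)$ breaks up: for $j \geq n-r+1$ it forces $H^{j}_{(P,z)}(R)$ to sit between two modules of the form $E(R/\m)^{s}$, and at the bottom it yields the exact sequence
\begin{equation*}
0 \to H^{n-r-1}_P(R) \to (H^{n-r-1}_P(R))_z \to H^{n-r}_{(P,z)}(R) \to H^{n-r}_P(R) \to 0,
\end{equation*}
together with isomorphisms (or short exact sequences) relating $H^{j}_{(P,z)}(R)$ for $j > n-r$ to the modules $E(R/\m)^{s_j(P)}$.

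Next I would compute the two ``known'' Euler characteristics. Since $(H^{n-r-1}_P(R))_z = H^{n-r-1}_Q(A)[z,z^{-1}]$ (by the localization computation in the Homogenization/De-homogenization section), Lemma \ref{chiH_P^n-2=0} gives $\chi((H^{n-r-1}_P(R))_z) = 0$; more generally the same lemma kills the $\chi$ of every localized module $(H^j_P(R))_z$ that is a shift of something of the form $N[z,z^{-1}]$, though here those vanish outright. Also $\chi(E(R/\m)^{s}) = \chi(H^n_{\m}(R))^{\oplus s}$; since $V(\m,z) = V(\m)$ is a single point, Theorem 2 of \cite{TJ} gives $\chi(H^n_{\m}(R)) = 1$ (one point, minus one, is zero — wait, that would give $0$; rather $\chi(E(R/\m)) = 1$ as used in the proof of Theorem \ref{HI^n-1-E^s} via $\chi(E_R(R/\m)^s) = s$, so $\chi(E(R/\m)^s) = s$). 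The key ``base case'' input is $\chi(H^{n-r}_{(P,z)}(R))$: here $V(P,z)$ is an $(r-1)$-dimensional non-singular variety, and I would invoke Theorem \ref{chi-HP^n-3=s-1} (for $r=2$) or, inductively, Theorem \ref{non-sing-surf} itself in dimension $r-1$ — so the proof is by induction on $r$, with the $r=2$ case being exactly Theorem \ref{chi-HP^n-3=s-1}.

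Then I would run the additivity of $\chi$ over the long exact sequence. Writing $\chi$ of the Mayer--Vietoris sequence and using that $\chi$ is additive on exact sequences, all the $(H^j_P(R))_z$ terms contribute $0$, the $H^j_P(R) = E(R/\m)^{s_j(P)}$ terms contribute $\pm s_j(P)$ with alternating signs governed by the cohomological degree $j$ (hence the $(-1)^{n-j}$), and the $H^j_{(P,z)}(R)$ terms assemble, via the same structural identifications applied one dimension down, into $\chi(H^{n-r}_{(P,z)}(R))$ plus correction terms of the form $E(R/\m)^{s_j}$. Untangling the signs: the shift from $H^{n-r-1}_P(R)$ to $H^{(n-1)-(r-1)-1}_{(P,z)}(R)$ in the ambient ring $R$ (of dimension $n$, but with $z$ now playing the role of a coordinate) accounts for the overall factor $(-1)^r$ versus $(-1)^{r-1}$, matching the induction. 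The main obstacle will be the bookkeeping of exactly which $H^j_{(P,z)}(R)$ are isomorphic to a single $E(R/\m)^{s_j(P)}$ versus sitting in a genuine extension, and correspondingly getting every sign in the alternating sum $\sum_{j \geq n-r}(-1)^{n-j}s_j(P)$ right; I would handle this by first proving the clean statement that, for $j \geq n-r+1$, one has an isomorphism $H^j_{(P,z)}(R) \cong E(R/\m)^{s_j(P)}$ (using that $(H^j_P(R))_z = 0$ and that the connecting maps to/from localizations vanish on $\m$-torsion modules), which reduces the entire long exact sequence to a single four-term sequence at the bottom plus a disjoint collection of isomorphisms, after which the Euler-characteristic count is immediate.
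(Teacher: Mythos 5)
Your proposal is correct and follows essentially the same route as the paper: induction on $r$ with Theorem \ref{chi-HP^n-3=s-1} as the base case, a general hyperplane section $z$, the sequence of \ref{Mayer-2} collapsing because the modules $H^j_P(R)$ for $j\geq n-r$ are $\m$-torsion (so their localizations at $z$ vanish and $H^j_{(P,z)}(R)\cong H^j_P(R)$ for $j\geq n-r+1$), $\chi((H^{n-r-1}_P(R))_z)=0$ from Lemma \ref{chiH_P^n-2=0}, and additivity of $\chi$ over the remaining four-term exact sequence. The only point left implicit is Bertini's theorem, which the paper invokes to guarantee that $V(P+(z))$ is again a non-singular irreducible variety of dimension $r-1$, so that the induction hypothesis applies to $P+(z)$.
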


\begin{proof}
We prove the result by induction on $r.$ For $r=2$
\begin{align*}
 \chi(H^{n-3}_P(R))=(-1)^2(-1 +
s_{n-2}(P)).
\end{align*}
This is Theorem \ref{chi-HP^n-3=s-1}.

Let $r\geq 3$. We assume the result for non-singular varieties of
$\dim = r-1$ and prove it for non-singular varieties of $\dim =r.$

 Let $H$ be a general
hyperplane. Say $H=V((z)).$ By Bertini's Theorem, $ V(P) \cap H=
V(P+(z))$ is a non-singular variety of dimension $r-1.$

As ht$(P+(z))=n-r,$ we get an exact sequence of the form
\begin{align*}
0\rightarrow H^{n-r-1}_P(R)\rightarrow (H^{n-r-1}_P(R))_z\rightarrow
H^{n-r}_{(P+(z))}(R)\rightarrow H^{n-r}_P(R)\rightarrow 0,
\end{align*}
and $H^j_{P}(R)\cong H^j_{(P+(z))}(R)$ for $j\geq n-r+1=n-(r-1).$
Now by induction hypothesis
\begin{align*}
\chi(H^{j}_{(P+(z))}(R))&=(-1)^{r-1}(-1+\sum_{j\geq
n-r-1}(-1)^{n-j}s_j(P+(z)))\\
&=(-1)^{r-1}(-1+\sum_{j\geq n-r-1}(-1)^{n-j}s_j(P)).
\end{align*}
Note that $\chi((H^{n-r-1}_P(R))_z)=0.$ Therefore
\begin{align*}
\chi(H^{n-r-1}_P(R)) & =(-1)^{r}(-1+\sum_{j\geq
n-r-1}(-1)^{n-j}s_j(P))+s_{n-r}(P)\\
& =(-1)^{r}(-1+\sum_{j\geq n-r}(-1)^{n-j}s_j(P)).
\end{align*}
\end{proof}

\section{examples}
In this section we compute De Rahm homology of two curves in
$K[x,y]$.
\begin{example}
Let $f(x,y)=y+h(x)\in A=K[x,y]$, where $h(x)$ does not have multiple
roots. Then $H_0(\underline{\partial};H_{(f)}^1(A))=0$ and
$H_1(\underline{\partial};H_{(f)}^1(A))\cong K$ and
$H_2(\underline{\partial}; H_{(f)}^1(A))=0.$
\end{example}
\begin{proof}
We have an exact sequence
\begin{center}
$0\rightarrow A\rightarrow A_f\rightarrow H_{(f)}^1(A)\rightarrow
0.$
\end{center}
We know $H_i(\underline{\partial}; A)$ by \cite[Theorem 2.6]{TJ}. So
it suffices to compute $H_i(\underline{\partial}; A_f).$ It is clear
that $H_1(\partial_y; A_f)=K[x].$\\
 First we compute
$H_0(\partial_y; A_f).$ Let $v\in A$, say
 $\deg_yv=m.$ Then $v=\phi_m(x)f^m+\cdots + \phi_1(x)f+\phi_0(x),$
 with $\phi_i(x)\in K[x].$
\begin{align*}
\text{Then} \quad v/f^i =\quad \text{ polynomial} \quad
 +v_2(x)/f + v_1(x)/f^2 + \cdots + v_r(x)/f^r.
 \end{align*}
 Note that for $i\geq 2$, $v_i(x)/f^i = \partial_y(u(x)/f^{i-1})$, where $u(x) = v_i(x)/(i-1).$ So $v/f^i = v_1/f + \partial_y(\theta)$,
 with $\theta \in A_f.$ We prove  $H_0(\partial_y;
 A_f)= K[x]/f.$ For  if $v(x)/f =\partial_y(u/f^i)$ for some $i\geq 1$ and $f$ does not divides $u$.
Then $v(x)/f = \partial_y(u)/f^i + iu/f^{i+1}$. Multiplying by $f^i$
shows that
 $f$ divides $u$ which is a contradiction. So $H_0(\partial_y; A_f)\cong
 K[x]/f.$\\
Next we compute $H_1(\partial_x;H_0(\partial_y; A_f))$. Note that if
$c\in K$ is a constant then
\begin{center}
$\partial_x(c/f)= -h'(x)/f^2 = \partial_y(h'(x)/f)=0$ in
$H_0(\partial_y;A_f).$
\end{center}
 So
$K/f \subseteq H_1(\partial_x;H_0(\partial_y; A_f))$.\\
Let $\partial_x(\phi(x)/f) = 0$ in $H_0(\partial_y; A_f)$. Then
$\partial_x(\phi(x)/f)=\partial_y(u(x,y)/f^i)$ for some $u(x,y)\in
 A$ and $f$ does not divides $u.$ By computing both sides we get that $f$ divides  $u$ if $i>1,$ which is a
 contradiction.  So $i=1$ and
\begin{center}
  $\phi(x) '/f + \phi(x).h'(x)/f^2 = \partial_y(u)/f - u/f^2.$
\end{center}
  Multiplying by
 $f^2$ gives,
\begin{center}
  $f.\phi(x)'+\phi(x)h'(x)=f.\partial_y(u)-u.$
\end{center}
  This
 implies $f$ divides $u-\phi(x).h'(x)$. Say $u-\phi(x).h'(x)=f.g$, for some $g=g(x,y)\in
 A$. Then $\partial_y(u) = \partial_y( f )g + f\partial_y( g) = g + f\partial_y( g).$
 So that
\begin{align*}
 f.\phi(x)'=(g+f\partial_y( g))f-fg.
\end{align*}
  So $\phi(x)'=g + f\partial_y( g) - g$,   $i.e. f\partial_y( g) = \phi(x)'$.
  This shows that
 $f$ divides $\phi(x)'$, this is possible only when $\phi(x)'=0$. So
 $\phi(x)=c$ constant.
 Thus
\begin{align*}
  H_1(\partial_x;K[x]/f)=K/f\cong K.
 \end{align*}

Now we compute $H_0(\partial_x; K[x]/f )$. Let $\phi(x)/f\in
H_0(\partial_y; A_f )=K[x]/f$. One can easily verify that
\begin{align*}
\phi(x)/f=\partial_x(\psi(x)/f)+\partial_y(u/f)
\end{align*}
where $\psi(x)=\int \phi(x)dx$ and $u= -\psi(x)h'(x).$ So
$H_0(\partial_x; K[x]/f) = 0$.

Since we have an exact sequence of the form
\begin{center}
$0\rightarrow H_0(\partial_x ;H_i(\partial_y;A_f))\rightarrow
H_i(\partial_x,\partial_y; A_f)\rightarrow
H_1(\partial_x;H_{i-1}(\partial_y; A_f))\rightarrow 0$, for all $i.$
\end{center}
As $H_0(\partial_x;H_1(\partial_y;A_f))=H_0(\partial_x; K[x])=0.$
Therefore
\begin{align*}
H_0(\partial_x,\partial_y; A_f) & \cong H_0(\partial_x; H_0(\partial_y ; A_f))=0,  and\\
H_1(\partial_x,\partial_y; A_f)& \cong H_1(\partial_x;
H_0(\partial_y; A_f))\cong K.\\
\text{Also \quad note} \quad H_2(\partial_x,\partial_y;
A_f)&=H_1(\partial_x; H_1(\partial_y; A_f))\cong K.
\end{align*}
 As $0\rightarrow A\rightarrow A_f\rightarrow H^1 _{(f)}(A)\rightarrow
 0$ is exact, by taking Koszul homology with respect to $\partial_x,\partial_y $ and
  using Lemma 2.7 of \cite{TJ} we get $H_2(\underline{\partial};
  H^1_{(f)}(A))=0$ and   $H_i(\underline{\partial}; A_f)\cong H_i(\underline{\partial}; H^1_{(f)}(A))$
for $i=0,1.$ Hence the result.
\end{proof}
Our next example is:
\begin{example}\label{application}
Let $f(x,y)=xy+1\in A=K[x,y]$, then as $K-$ vector spaces
$H_0(\underline{\partial}; H_{(f)}^1(R))\cong K$,
$H_1(\underline{\partial} ; H_{(f)}^1(R))\cong K$ and
$H_2(\underline{\partial} ; H_{(f)}^1(R))=0$.
\end{example}
\begin{proof}
As in the above example it is enough to compute
$H_i(\underline{\partial}; A_f).$ First note that $H_1(\partial_y;
A_f)=K[x]$, $f-1=xy,$
$\partial_x(f)=y$ and $\partial_y(f)=x.$\\
First we compute $H_0(\partial_y; A_f).$ Let $a=a(x,y)\in A$. Then
\begin{center}
$\partial_y(a/f^i)=\partial_y(a)/f^i - ia.x/f^{i+1}$ \quad  for all
$i\geq 1.$
\end{center}
\textit{Case}$(1):$ Let $a(x, y)= \phi(x).$ Then
$\partial_y(\phi(x)/f^i)=-i\phi(x) x/f^{i+1}$ for all $i\geq 1.$
This shows that
\begin{equation}\label{x-van}
x \phi(x)/f^i \equiv 0 \quad  \text{in} \quad H_0(\partial_y; A_f)
\quad \text{for all} \quad i\geq 2.
\end{equation}
\begin{align*}
\text{ We have}\quad \partial_y(y.\phi(x)/f^i)&=\phi(x)/f^i - i\phi(x).xy/f^{i+1}\\
&=\phi(x)/f^i - i\phi(x)(f-1)/f^{i+1}\\
&=\phi(x)/f^i - i\phi(x)/f^{i} + i\phi(x)/f^{i+1}.
\end{align*}
when $i=1$ $\phi(x)/f^2 \equiv 0$ in $H_0(\partial_y; A_f).$ Thus
$i\phi(x)/f^{i+1}\equiv (i-1)\phi(x)/f^i$ in $H_0(\partial_y; A_f).$
Continuing in this way we get
\begin{equation}\label{x-equ}
\phi(x)/f^{i}\equiv k_0\phi(x)/f \quad \text{in} \quad
H_o(\partial_y; A_f), \text {for all} \quad i\geq 2 \quad k_0\in K.
\end{equation}

Let $a(x, y)=y^m, m\geq 1.$ Then
 \begin{align*}
 \partial_y(y^m/f^i) & =my^{m-1}/f^i -ixy^m/f^{i+1}\\
& = my^{m-1}/f^i-i(f-1)y^{m-1}/f^{i+1}\\
 & =(m-i)y^{m-1}/f^i + iy^{m-1}/f^{i+1}.
\end{align*}
This shows that $y^m/f^i \equiv k_1y^m/f^{i-1}$ in $ H_o(\partial_y;
A_f)$ for some $k_1\in K$,for all $i\geq 2.$ Continuing in this way
we get
\begin{equation}\label{y-equ}
y^m/f^i \equiv ky^m/f \quad \text{in} \quad H_0(\partial_y; A_f)
\quad \forall \quad i\geq 1, \quad \text{for some } \quad k\in K.
\end{equation}

\textit{Case}$(2):$ Write $a=a(x, y) = \phi(x) + \psi(y) +
xya_1(x,y).$
\begin{align*}
\text{So} \quad a/f^i & = \phi_1(x)/f^i +
\psi_1(y)/f^i + a_1(f-1)/f^i\\
& \equiv \phi_1(x)/f + \psi_1(y)/f + a_1/f^{i-1} - a_1/f^i \quad
\text{in} \ H_0(\partial_y; A_f)\ \text{(by \ref{x-equ} and
\ref{y-equ})}
\end{align*}
Write $a_1= \phi_2(x)+ \psi_2(y) + xya_2(x,y)$ and do the same as
above to get
\begin{center}
$a/f^i \equiv \phi(x)/f + \psi(y)/f \quad $  in $\quad
H_0(\partial_y; A_f).$
\end{center}
Therefore $H_0(\partial_y; A_f) = < \bar{\phi(x)/f}, \bar{
\psi(y)/f} >$ as a $K$-vector space.

Now we compute $H_0(\partial_x; H_0(\partial_y; A_f)).$  Let $x^n/f
\in H_0(\partial_y; A_f)$ for some $n\geq 1.$
\begin{align*}
\text{Since} \quad \partial_x(x^{n+1}/f)& = (n+1)x^n/f -
x^{n+1}y/f^2\\
& = (n+1)x^n/f - x^n(f-1)/f^2\\
& = nx^n/f + x^n/f^2.
\end{align*}
By (\ref{x-van} ) $x^n/f^2\equiv 0$ in $H_0(\partial_y; A_f).$ So
$x^n/f \equiv
0$ in $H_0(\partial_x; H_0(\partial_y; A_f))$ for all $n\geq 1.$\\
Let $y^m/f \in H_0(\partial_y; A_f).$
\begin{align*}
\text{Since} \quad \partial_y(y^{m+1}/f)& =(m+1)y^m/f - xy^{m+1}/f^2\\
& =(m+1)y^m/f - y^m(f-1)/f^2\\
& =my^m/f + y^m/f^2.
\end{align*}
Note that $\partial_x(-y^{m-1}/f)=y^m/f^2.$ So
\begin{align*}
 y^m/f &=1/m(\partial_y(y^{m+1}/f)+ \partial_x(-y^{m-1}/f))\\
& \equiv 0 \quad \text{in}\quad H_0(\partial_x; H_0(\partial_y;
A_f))\quad \text{for all} \quad  m\geq 1.
\end{align*}
 Therefore $H_0(\partial_x;
H_0(\partial_y; A_f))=<1/f>$ as a $K$ vector space. In particular
\begin{equation}\label{H_0-dim}
 \dim_KH_0(\partial_x; H_0(\partial_y; A_f))\leq
1.
\end{equation}
 Since we have an exact sequence of the form
\begin{center}
$0\rightarrow H_0(\partial_x; H_i(\partial_y;A_f))\rightarrow
H_i(\underline{\partial}; A_f)\rightarrow H_1(\partial_x;
H_{i-1}(\partial_y; A_f))\rightarrow 0$, for all $i.$
\end{center}

As $H_0(\partial_x; H_1(\partial_y;A_f))=H_0(\partial_x; K[x])=0.$
Therefore
\begin{equation}\label{H_0-equ}
H_0(\underline{\partial}; A_f) \cong H_0(\partial_x; H_0(\partial_y;
A_f)),
\end{equation}
\begin{equation}\label{H_1-equ}
H_1(\underline{\partial}; A_f) \cong H_1(\partial_x; H_0(\partial_y;
A_f)).\quad \text{Note}\quad H_0(\underline{\partial};A_f)\cong
H_0(\underline{\partial}; H_{(f)}^1(A)).
\end{equation}

Note that the points at $\infty$ of $f$ are $[1:0:0]$ and $[0:1:0].$
Therefore  by Theorem \ref{H_0-pts at inf} we get
\begin{center}
 $2\leq
1+\dim_KH_0(\underline{\partial}; H_{(f)}^1(A)),$ and by
(\ref{H_0-dim}) we get  $\dim_KH_0(\underline{\partial}; A_f)=1.$
\end{center}
Now we compute $H_1(\partial_x; H_0(\partial_y; A_f)).$

Let $\xi = \phi(x)/f + \psi(y)/f \in H_1(\partial_x ; H_0(\partial_y
; A_f)).$ Then
\begin{align*}
\partial_x(\phi(x)/f + \psi(y)/f)&=0 \quad \text{in} \quad H_0(\partial_y ;
A_f).\\
\Rightarrow \quad \partial_x(\phi(x)/f)+ \partial_x(\psi(y)/f) & =
\partial_y(a(x,y)/f^i), \quad i\geq 1 \quad f\nmid a(x,y).
\end{align*}
By computing both sides we get $i=1.$ Thus
\begin{align*}
\partial_x(\phi(x)/f)+ \partial_x(\psi(y)/f) & =
\partial_y(a(x,y)/f).\\
 \Rightarrow \quad \partial_x(\phi(x))/f
- \phi(x)y/f^2 - \psi(y)y/f^2 & = \partial_y(a)/f - ax/f^2.\\
\Rightarrow \quad f\partial_x(\phi(x))-\phi(x)y - \psi(y)y & =
f\partial_y(a)-ax.\\
\Rightarrow \quad f(\partial_x(\phi(x))-\partial_y(a)) &
=\phi(x)y+\psi(y)y - ax.
\end{align*}
Applying $\partial_y$ we get
\begin{align*}
x(\partial_x(\phi(x))-\partial_y(a)) - f\partial_y(\partial_y(a)) &
= \phi(x) + \psi(y) + y\partial_y(\psi(y)) - x\partial_y(a).\\
\Rightarrow \quad x\partial_x(\phi(x)) - f\partial_y(\partial_y(a))
& = \phi(x)+ \psi(y) + y\partial_y(\psi(y)).
\end{align*}
Thus $f$ divides $(x\partial_x(\phi(x))-\phi(x))-( \psi(y) +
y\partial_y(\psi(y))).$ As $f=xy+1.$ We get
$(x\partial_x(\phi(x))-\phi(x))-( \psi(y) +
y\partial_y(\psi(y)))=0.$ Thus $x\partial_x(\phi(x))-\phi(x)=0$ and
$\psi(y) + y\partial_y(\psi(y))=0.$ Hence $\phi(x)= cx$ where $c \in
K$ and $\psi(y)=0.$
 Therefore
\begin{center}
$H_1(\partial_x; H_0(\partial_y; A_f))= < x/f
>.$
\end{center}
 Also $x/f\neq 0$ in
$H_0(\partial_y; A_f).$ For if $x/f =
\partial_y(u(x,y)/f^i)$ such that $f$ does not divide $u.$ Then
$xf^{i}=\partial_y(u) f-ixu.$ Thus $f$ divides $-ixu.$ This is a
contradiction as $f$ does not divides $u$ and $x.$ Therefore
$H_1(\partial_x; H_0(\partial_y; A_f))\neq 0.$ So $\dim_K
H_1(\partial_x; H_0(\partial_y; A_f))=1.$ Therefore
by(\ref{H_1-equ}) $\dim_KH_1(\underline{\partial}; A_f)=1.$ Hence
the result.
\end{proof}

\end{document}